\newtheorem{theorem}{Theorem}[section]
\newtheorem{lemma}[theorem]{Lemma}
\newtheorem{proposition}[theorem]{Proposition}
\theoremstyle{definition}
\newtheorem{example}[theorem]{Example}
\newtheorem{remark}[theorem]{Remark}
\begin{document}
  \title[Periodic structure for piecewise contracting map]{Periodic structures for nonlinear piecewise contracting maps}

  \author[F. Nakamura]{Fumihiko Nakamura}
  \address[F. Nakamura]{Kitami Institute of Technology, Kitami, 090-8507, Japan}
  \email[F.Nakamura]{nfumihiko@mail.kitami-it.ac.jp}

    \date{}

  \subjclass[2010]{37C25, 37G15}
  \keywords{nonlinear contracting map, periodic point, Farey structure}

   \begin{abstract}
In this paper, we first show that any nonlinear monotonic increasing contracting maps with one discontinuous point on a unit interval which has an unique periodic point with period $n$ conjugates to a piecewise linear contracting map which has periodic point with same period. Second, we consider one parameter family of monotonic increasing contracting maps, and show that the family has the periodic structure called Arnold tongue for the parameter which is associated with the Farey series. This implies that there exist a parameter set with a positive Lebesgue measure such that the map has a periodic point with an arbitrary period. Moreover, the parameter set with period $(m+n)$ exists between the parameter set with period $m$ and $n$.
   \end{abstract}

  \maketitle

\section{Introduction}

The piecewise linear contracting map on a unit interval defined by
\begin{eqnarray}\label{NS}
S_{\alpha,\beta}(x)=\alpha x+\beta \ ({\rm mod}  \ 1),\ \ \ \text{with}\ \ \ 0<\alpha,\beta<1
\end{eqnarray}
is known as Nagumom-Sato model \cite{Nagumo} or Keener map\cite{Keener}, which describes a dynamics of a single neuron introduced from the Caianiello's model\cite{Caianiello}. The map \eqref{NS} has a unique discontinuous point when $\alpha+\beta>1$, and it is known that the map has a unique periodic point for almost every parameter $\alpha$ and $\beta$ and a period with an arbitrary numbers can be found by choosing appropriate parameters \cite{N1}. The parameter region divided by a set of the parameters for which the map has periodic point with each period is known as {\it Arnold tongue} or {\it Farey structure} which shows a layered structure based on a Farey series. More precisely, there is a periodic region with period $n_1+n_2$ between the regions with period $n_1$ and $n_2$ if $|l_1n_2-l_2n_1|=1$ for irreducible fractions $l_1/n_1$ and $l_2/n_2$ in its parameter space.

The Arnold tongue is observed in many researches, for instance \cite{Glass,Boyland,Swiatek} showed the structure for the standard circle maps which describes a cardiac oscillation model. Such structure for the piecewise linear model has already been studies in, for example \cite{Keener,Ding,Hata1,N1}, especially Keener's results cover the nonlinear models and show for almost all parameters the system has a rational rotation number so that it possesses a periodic point. Moreover, an irrational rotation number is achieved on the Cantor set in the parameter space.

In this paper, we focus on the {\it nonlinear} model with a discontinuous point and show that any nonlinear system with certain conditions conjugates the linear contracting map \eqref{NS}. Furthermore, we prove that the family of parametrized nonlinear systems has the layered structure based on Farey series. In \cite{N1}, although similar layered structures were numerically observed for non-linear models (e.g. $T(x)=\alpha x^2+\beta$ (mod 1), $T(x)=\alpha \sqrt{x}+\beta$ (mod 1), etc), the rigorous proof for the maps could not be accomplished because of the difficulty for the non-linearity. 

The organization of this paper is as follows. In section \ref{S2}, we prepare some notations and previous results for the linear model. In section \ref{S3}, we show that almost any nonlinear monotonic increasing maps with one discontinuous point conjugates to the linear contracting maps \eqref{NS} for some parameter by constructing a concrete homeomorphism. This implies that any two nonlinear maps with a periodic point with same period can be conjugated each other under certain conditions. Note that the nonlinear map does not require the contracting property. In section \ref{S4}, we consider one parameter family of nonlinear monotonic increasing contracting maps, and give the proof of an existence of Farey structure on the parameter space. This implies that we can find the map with an arbitrary period by choosing an appropriate parameter for a family of nonlinear maps, and justified a part of Farey structure observed in \cite{N1}.

\section{Preliminary}\label{S2}

In this section, we introduce the facts for linear contracting maps \eqref{NS} with a periodic point (See \cite{N1,N2} for the details). Let $Pr(n):=\{l<n\ |\ GCD(n,l)=1\}$ for each $n\in \mathbb{N}$. In this paper, if we write $(n,l)$, then $n$ and $l$ always satisfy $n\in\mathbb{N}_{\geq 2}$ and $l\in Pr(n)$. We define two functions $B^U_{n,l}(\alpha)$ and $B^L_{n,l}(\alpha)$ and sets $\{D_{n,l}\}_{(n,l)}$ as follows;
\begin{eqnarray}
B^U_{n,l}(\alpha)&=&(1-\alpha )\left(\frac{1}{1-\alpha^n}\sum^{n-1}_{m=1}k_m\alpha^m +1\right),\label{BU}\\
B^L_{n,l}(\alpha)&=&(1-\alpha )\left(\frac{1}{1-\alpha^n}\sum^{n-1}_{m=1}k_m\alpha^m +1-\frac{\alpha^{n-1}-\alpha^n}{1-\alpha^n}\right),\label{BL}
\end{eqnarray}
and
\begin{eqnarray}\label{ma}
D_{n,l}=\{(\alpha,\beta) \in\ (0,1)^2\ |\ B^L_{n,l}(\alpha) \leq \beta <B^U_{n,l}(\alpha)\},
\end{eqnarray}
where
\begin{eqnarray}\label{gs}
k_m :=\left[ \frac{(m+1)l}{n} \right] -\left[ \frac{ml}{n}\right] \ \ \ {\rm for}\ \ \ m\in \mathbb{Z},
\end{eqnarray}
where $[x]$ is the integer part of $x$.  In \cite{N1}, we showed that if the parameter $\alpha$ and $\beta$ are chosen in the set $D_{n,l}$, then the map $S_{\alpha,\beta}$ has periodic point with period $n$. It is clear that the set $D_{n,l}$ has a positive Lebesgue measure. Moreover, the set $D_{n+n',l+l'}$ exists between $D_{n,l}$ and $D_{n',l'}$ if $|n'l-nl'|=1$. In this way, we can obtain explicit formula for the parameter region which the map $S_{\alpha,\beta}$ has a periodic point for the linear maps, and show that these regions display a layered structure associated with Farey series. Moreover, when $(\alpha,\beta)\in D_{n,l}$, the $n$ periodic points for the map $S_{\alpha,\beta}$ is given by
\begin{eqnarray}
{\rm Per}_n(S_{\alpha,\beta})=\left\{\frac{\beta}{1-\alpha}-A_i(\alpha)\ \middle|\ i=0,\cdots,n-1 \right\},
\end{eqnarray}
where, for $i=0,1,\cdots, n-1$,
\begin{eqnarray}\label{Ai}
A_i(\alpha)=\frac{1}{1-\alpha ^n}\left( \sum_{m=0}^{i-1}k_m\alpha^{i-m-1}+\sum_{m=i}^{n-1}k_m \alpha ^{n+i-m-1}\right).
\end{eqnarray}

We call the sequence $\{k_i\}_{i\in\mathbb{Z}}$  defined by (\ref{gs}) a {\bf rational characteristic sequence} with respect to $(n,l)$. The following propositions give the properties of rational characteristic sequences, and plays an important role to prove Theorem \ref{mainthm2}.
  
\begin{proposition}\label{PoGS}(\cite{N1}, Proposition 2.2)
Let $\{k_m\}_{m\in \mathbb{Z}}$ be a rational characteristic sequence with respect to $(n,l)$. We then have the following properties.
\begin{itemize}
 \item[{\rm(i)}]$k_{m\pm n}=k_m \ \ \ {\rm for}\ \ \ m\in \mathbb{Z}$,
 \item[{\rm(ii)}]$k_{n-1-m}=k_m \ \ \ {\rm for}\ \ \ m\in \mathbb{Z},\ m\notin n\mathbb{Z},n\mathbb{Z}-1$,
 \item[{\rm(iii)}] $k_{m-\hat{l}}=k_m  \ \ \ {\rm for}\ \ \ m\in \mathbb{Z},\ m\notin n\mathbb{Z},n\mathbb{Z}-1$,
\end{itemize}
where $\hat{l}:=\min\{ t\in \mathbb{N}\ |\ tl=1\  ({\rm mod}\  n)\}$. Note that $k_0=0$ and $k_{n-1}=1$ always hold obviously.
\end{proposition}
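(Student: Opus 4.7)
The plan is to derive all three identities directly from the definition $k_m = [(m+1)l/n] - [ml/n]$, using the integrality of $l$ for (i), the elementary identity $[-x] = -[x] - 1$ for non-integer $x$ for (ii), and the congruence $\hat{l}\,l \equiv 1 \pmod n$ for (iii). Property (i) is immediate: substituting $m \pm n$ for $m$ shifts each of $(m+1)l/n$ and $ml/n$ by the integer $\pm l$, so the floors each shift by $\pm l$, and these contributions cancel upon subtraction.

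For (ii) I would rewrite $(n-m)l/n = l - ml/n$, giving $[(n-m)l/n] = l + [-ml/n]$, and analogously $[(n-1-m)l/n] = l + [-(m+1)l/n]$. Because $\gcd(n,l) = 1$, the quotient $ml/n$ is an integer exactly when $n \mid m$, and $(m+1)l/n$ is an integer exactly when $m \in n\mathbb{Z} - 1$, which are precisely the excluded indices. Under those exclusions, applying $[-x] = -[x] - 1$ to both terms and subtracting causes the two $l$'s and the two $-1$'s to cancel, leaving $k_{n-1-m} = [(m+1)l/n] - [ml/n] = k_m$.

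For (iii) the starting point is $\hat{l}\,l = 1 + jn$ for some $j \in \mathbb{Z}$, whence $(m - \hat{l})l \equiv ml - 1 \pmod n$. The cleanest route is to observe that, since $0 < l < n$, each $k_m$ takes values only in $\{0,1\}$, with $k_m = 1$ precisely when the residue $r := ml \bmod n$ satisfies $r \geq n - l$, and $k_m = 0$ otherwise. The residue of $(m - \hat{l})l$ modulo $n$ is then $r - 1$ when $r \geq 1$ and $n - 1$ when $r = 0$. The excluded indices match exactly the two boundary cases: $m \in n\mathbb{Z}$ corresponds to $r = 0$, while $m \in n\mathbb{Z} - 1$ corresponds to $r = n - l$ (since $ml \equiv -l \pmod n$). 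Outside these, a short case split on $r > n - l$ versus $r < n - l$ yields $k_{m - \hat{l}} = k_m$.

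The main obstacle I anticipate is not algebraic but bookkeeping: one must verify that the exclusion sets in (ii) and (iii) are precisely the indices at which the floor identity or the residue shift would otherwise fail. The key observation is $m \in n\mathbb{Z} - 1 \Longleftrightarrow ml \equiv n - l \pmod n$, which aligns the exclusion in (iii) with the same set of bad indices as in (ii) rather than some other boundary residue.
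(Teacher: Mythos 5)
Your argument is correct and complete. Note that the paper itself gives no proof of this proposition; it is quoted verbatim from the author's earlier work \cite{N1}, so there is nothing in this document to compare your route against. Your three steps all check out: (i) is the translation invariance of the floor under integer shifts; (ii) correctly uses $[-x]=-[x]-1$ for non-integer $x$, with the exclusions $m\in n\mathbb{Z}$ and $m\in n\mathbb{Z}-1$ being exactly the indices where $ml/n$ or $(m+1)l/n$ is an integer (since $\gcd(n,l)=1$); and for (iii) your reformulation $k_m=1\iff ml\bmod n\geq n-l$ is the right invariant, the shift $m\mapsto m-\hat{l}$ decrements the residue by one away from the two boundary residues $r=0$ and $r=n-l$, and those boundary residues correspond precisely to the excluded index classes. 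The case split $r>n-l$ versus $1\leq r<n-l$ then closes the argument. This residue-class characterization of $k_m$ is a standard way to prove such Beatty/Sturmian-type identities and would serve as a valid self-contained replacement for the external citation.
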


\begin{proposition}\label{impGS}(\cite{N1}, Proposition 2.3)
Let $\{k_m\}_{m\in\mathbb{Z}}$ be a rational characteristic sequence with respect to $(n,l)$ and $\{k'_m\}_{m\in\mathbb{Z}}$ be another rational characteristic sequence with respect to $(n',l')$. If $\frac{l}{n}<\frac{l'}{n'}$ and $nl'-n'l=1$, then the sequence $\{\hat{k}_m\}_{m\in\mathbb{Z}}$ defined by
\begin{eqnarray}
\hat{k}_{m}:=
\begin{cases}
k_m\ \ \ \ {\rm for}\ \ \ m=0,\cdots,n-1 & \\
k'_{m-n}\ \ \ \ {\rm for}\ \ \ m=n,\cdots,n+n'-1 &
\end{cases} \nonumber
\end{eqnarray}
and
\begin{eqnarray}
\hat{k}_{\bar{m}}:=\hat{k}_{m}\ \ \ {\rm if}\ \ \ \bar{m}=m+t(n+n')\ \ \ {\rm with}\ \ \ m=0,\cdots,n+n'-1\ \ {\rm and}\ \ t\in \mathbb{Z}\backslash \{0\}\nonumber
\end{eqnarray}
is the rational characteristic sequence with respect to $(n+n',l+l')$.
\end{proposition}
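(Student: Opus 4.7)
The plan is to verify the defining formula
$$\hat{k}_m = \left\lfloor \frac{(m+1)(l+l')}{n+n'} \right\rfloor - \left\lfloor \frac{m(l+l')}{n+n'} \right\rfloor$$
for every $m \in \{0,1,\ldots,n+n'-1\}$; the periodic extension for the other $m$ then follows at once from the stated recurrence. Write $N := n+n'$ and $L := l+l'$ for brevity, and exploit the Farey-pair hypothesis $nl'-n'l = 1$ in the two equivalent forms
\begin{equation*}
\frac{L}{N} \;=\; \frac{l}{n} + \frac{1}{nN} \;=\; \frac{l'}{n'} - \frac{1}{n'N},
\end{equation*}
which are just restatements of $nl' - n'l = 1$ after clearing denominators.

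For the first range $m \in \{0,\ldots,n-1\}$ I would use the left identity. Writing $(m+1)l = qn + r$ with $0 \le r \le n-1$ gives $(m+1)L/N = q + r/n + (m+1)/(nN)$, and the perturbation is harmless because
$$\frac{r}{n} + \frac{m+1}{nN} = \frac{rN + m+1}{nN} \le \frac{(n-1)N + n}{nN} < 1.$$
Hence $\lfloor (m+1)L/N \rfloor = q = \lfloor (m+1)l/n \rfloor$, and the same argument (with $m$ in place of $m+1$) yields $\lfloor mL/N \rfloor = \lfloor ml/n \rfloor$. Subtracting shows the difference equals $k_m$, matching the definition of $\hat k_m$ on this range.

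For the second range $m \in \{n,\ldots,n+n'-1\}$, set $j := m - n \in \{0,\ldots,n'-1\}$. The calculation $nL/N = n(l+l')/N = l + 1/N$ (which uses $nl' = n'l + 1$) lets me decompose
$$\frac{mL}{N} = l + \frac{1}{N} + \frac{jL}{N} \;=\; l + \frac{jl'}{n'} + \frac{n' - j}{n'N},$$
after substituting the right form of $L/N$ above. Writing $jl' = q'n' + r'$ with $0 \le r' \le n'-1$, the fractional adjustment satisfies
$$0 \le \frac{r'}{n'} + \frac{n'-j}{n'N} = \frac{r'N + n' - j}{n'N} \le \frac{(n'-1)N + n'}{n'N} < 1,$$
so its floor vanishes and $\lfloor mL/N \rfloor = l + q' = l + \lfloor jl'/n' \rfloor$. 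The identical argument with $j+1 \in \{1,\ldots,n'\}$ handles $\lfloor (m+1)L/N \rfloor$, and subtracting gives exactly $k'_j = k'_{m-n}$, which is $\hat k_m$ by definition.

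The main obstacle is not conceptual but bookkeeping: making sure that in both ranges the added fractional perturbations $(m+1)/(nN)$ and $(n'-j)/(n'N)$ are small enough that no integer boundary is crossed, so that the floors of the mediant expressions agree with those of $l/n$ and $l'/n'$ respectively. Once these two uniform estimates are established, the concatenation property is automatic, and the continuation to all $m \in \mathbb{Z}$ is handed to us by the periodicity clause in the statement together with property (i) of Proposition \ref{PoGS} applied to $(n+n',l+l')$.
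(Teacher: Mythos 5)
Your proof is correct. Note that the paper itself gives no proof of this proposition---it is imported verbatim from \cite{N1} (Proposition 2.3)---so there is nothing internal to compare against; your argument stands as a self-contained verification. The key identities $\frac{l+l'}{n+n'}=\frac{l}{n}+\frac{1}{n(n+n')}=\frac{l'}{n'}-\frac{1}{n'(n+n')}$ are exactly the right way to exploit $nl'-n'l=1$, and your two uniform estimates do ensure no integer boundary is crossed (in the first range the worst case $r=n-1$, $j=n$ gives $rN+j\le nN-n'<nN$; in the second range $r'N+n'-j\le n'N-n<n'N$ and $n'-j\ge 0$ throughout $j\in\{0,\dots,n'\}$), so the floors collapse as claimed and $\hat k_m$ agrees with the characteristic sequence of $(n+n',l+l')$ on a full period; periodicity then extends it to all of $\mathbb{Z}$.
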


\begin{remark}\label{palindrome}
From (ii) in Proposition \ref{PoGS}, $(k_1k_2\cdots k_{n-2})=(k_{n-2}\cdots k_2k_1)$ holds when $\{k_i\}$ is a rational characteristic sequence. From Proposition \ref{impGS}, 
\begin{eqnarray}
(\hat{k}_1\cdots\hat{k}_{n+n'-2})&=&(k_1\cdots k_{n-1}k'_0\cdots k'_{n'-2})\nonumber\\
&=&(k_1\cdots k_{n-2} 1 0 k'_1\cdots k'_{n'-2})\nonumber
\end{eqnarray}
since $\{\hat{k}_i\}$, $\{k_i\}$ and $\{k'_i\}$ are rational characteristic sequences with respect to $(n+n',l+l')$, $(n,l)$ and $(n',l')$, using (ii) in Proposition \ref{PoGS} again, we have
\begin{eqnarray}
(\hat{k}_1\cdots\hat{k}_{n+n'-2})&=&(k'_{n'-2}\cdots k'_{1} 0 1 k_{n-2}\cdots k_{1})\nonumber\\
&=&(k'_1\cdots k'_{n'-2} 0 1 k_1\cdots k_{n-2})\nonumber
\end{eqnarray}
These calculations are used many times in the proof of Theorem \ref{mainthm2}.

\end{remark}

Next, considering the pre-images of $0$ or the discontinuous point is useful to analyse the contracting maps. Indeed, we can write the pre-image of $0$ for the map $S_{\alpha,\beta}$ explicitly as follows. We also consider the pre-image for the nonlinear map in the section 3.
\begin{proposition}{(\cite{N2}, Proposition 4)}\label{keypro3}
Assume that $(\alpha,\beta)\in D_{n,l}$, then
\begin{eqnarray}\label{ioz}
S_{\alpha,\beta}^{-i}(0)=\sum_{m=1}^i\frac{k_{n-i+m-1}-\beta}{\alpha^m}\in [0,1],\ \ \ \ (i=1,\cdots,n-1),
\end{eqnarray}
where $\{k_m\}$ is a rational characteristic sequence with respect to $(n,l)$. \\Moreover, for $i=n$, $S_{\alpha, \beta}^{-n}(0)$ is not in $[0,1]$.
\end{proposition}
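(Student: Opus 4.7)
The plan is to treat the claimed formula as an ansatz and verify it directly, rather than constructing $S_{\alpha,\beta}^{-i}(0)$ by iterative back-substitution. Set $x_0 := 0$ and $x_i := \sum_{m=1}^{i} (k_{n-i+m-1} - \beta)\alpha^{-m}$ for $1 \le i \le n$. A one-line index shift yields the recursion $\alpha x_i + \beta = k_{n-i} + x_{i-1}$. Since $k_{n-i}\in\{0,1\}$ and subtracting the integer part is precisely the mod-$1$ operation, this identity is equivalent to $S_{\alpha,\beta}(x_i) = x_{i-1}$ whenever $x_i \in [0,1]$ and $x_{i-1} \in [0,1)$. Moreover, for any $y\in[0,1]$ the branch integer producing a preimage in $[0,1]$ is unique, because the admissible range $[\beta-y,\alpha+\beta-y]$ has length $\alpha<1$. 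Consequently, once $x_i\in[0,1]$ is known for $1\le i \le n-1$, the $x_i$ are forced to coincide with $S_{\alpha,\beta}^{-i}(0)$, and the proposition reduces to showing (a) $x_i \in [0,1]$ for $i=1,\dots,n-1$ and (b) $x_n \notin [0,1]$.

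For (b), I would compute $x_n(\beta)$ explicitly, substitute $k_0=0$, $k_{n-1}=1$, and use the palindrome identity (ii) of Proposition \ref{PoGS} to reindex $\sum_{m=1}^{n} k_{m-1}\alpha^{n-m} = 1 - \alpha^{n-1} + \sum_{m=1}^{n-1} k_m\alpha^{m}$. Comparison with (\ref{BL}) shows that the affine-in-$\beta$ function $x_n(\beta)$ vanishes precisely at $\beta = B^L_{n,l}(\alpha)$. Since its slope is negative, $x_n \le 0$ on $D_{n,l}$, and strictly $x_n<0$ for $\beta > B^L_{n,l}(\alpha)$, which gives (b) on the open part of the tongue.

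For (a), I proceed by induction on $i$. The base case $i=1$ reads $x_1 = (1-\beta)/\alpha$, which lies in $[0,1]$ because a short calculation using $k_{n-1}=1$ gives $B^L_{n,l}(\alpha) \ge 1-\alpha$, while $\beta<1$. For the inductive step, the recursion says that $x_i \in [0,1]$ is equivalent to $x_{i-1} + k_{n-i} \in [\beta,\alpha+\beta]$, i.e.\ that $x_{i-1}$ lies in the left branch $[\beta,1)$ or the right branch $[0,\alpha+\beta-1)$ of the image of $S_{\alpha,\beta}$ according to whether $k_{n-i}=0$ or $1$.

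The principal obstacle is this branch-matching in step (a): one must show that the combinatorial definition $k_m = [(m+1)l/n]-[ml/n]$ selects exactly the branch that keeps $x_i$ in the unit interval for parameters in $D_{n,l}$. I expect the cleanest route is to convert each required inequality into a comparison of two partial sums $\sum k_m\alpha^m$, and then invoke the Sturmian balance encoded in Proposition \ref{PoGS}, particularly the palindrome (ii) and periodicity (i), to verify these comparisons. Since the identity $x_n=0$ already pins $B^L_{n,l}$, one expects $x_{n-1}=1$ to pin $B^U_{n,l}$ analogously (the other boundary of the tongue corresponds to the discontinuity being mapped to $0$ by $S^{n-1}$), after which the intermediate estimates for $1<i<n-1$ interpolate between these extremes via telescoping arguments built from the recursion.
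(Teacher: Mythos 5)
The paper itself offers no proof of this proposition --- it is imported verbatim from \cite{N2} (Proposition 4) --- so there is nothing internal to compare against; your proposal has to stand on its own. Its skeleton is sound: the index shift giving $\alpha x_i+\beta=k_{n-i}+x_{i-1}$ is correct, the uniqueness of the preimage branch (the admissible interval $[\beta-y,\alpha+\beta-y]$ has length $\alpha<1$) is the right reason the ansatz is forced once $x_i\in[0,1]$ is known, and your computation that $x_n(\beta)$ vanishes exactly at $\beta=B^L_{n,l}(\alpha)$ checks out against \eqref{BL} (including your correct observation that the ``Moreover'' clause degenerates at the included boundary $\beta=B^L_{n,l}$, where $0$ becomes periodic).

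The gap is in step (a), which is the entire substance of the proposition. You reduce it to the branch-matching inequalities $x_{i-1}\ge\beta$ when $k_{n-i}=0$ and $x_{i-1}\le\alpha+\beta-1$ when $k_{n-i}=1$, but you do not prove them; the plan you give for them rests on an anchor that is false. The equation $x_{n-1}=1$ does \emph{not} pin $B^U_{n,l}$: already for $(n,l)=(2,1)$ one has $x_1=(1-\beta)/\alpha=1$ at $\beta=1-\alpha$, whereas $B^U_{2,1}=(1+\alpha-\alpha^2)/(1+\alpha)$. More generally the upper boundary $B^U_{n,l}$ is characterized by the periodic orbit reaching the endpoint of the interval, not by the backward orbit of $0$; indeed for $(3,1)$ the condition $x_2\ge 0$ reads $\beta\le 1/(1+\alpha)$, which is strictly weaker than $\beta<B^U_{3,1}$, so the preimage conditions hold on a set strictly larger than $D_{n,l}$ and cannot be ``interpolated'' between two boundary identities of the tongue. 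Consequently the $2(n-2)$ inequalities must each be verified directly as comparisons of partial sums $\sum k_m\alpha^m$ against $B^L_{n,l}$ and $B^U_{n,l}$, using the Sturmian properties of Proposition \ref{PoGS}; naming this as ``the principal obstacle'' and deferring it to an expected telescoping argument leaves the proof incomplete at exactly the point where the hypothesis $(\alpha,\beta)\in D_{n,l}$ has to enter.
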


In the end of this section, we introduce the results in \cite{Hata1, Hata2} which tell us that if parameters $(\alpha,\beta)$ and $(\alpha',\beta')$ are chosen from same set $D_{n,l}$, then the maps $S_{\alpha,\beta}$ and $S_{\alpha',\beta'}$ are conjugate each other. More precisely, the following proposition holds. 

\begin{proposition}[\cite{Hata1}, Theorem 7.1]
The followings hold:
\begin{itemize}
\item[(i)] If $(\alpha,\beta), (\alpha,\beta)\in int(D_{n,l})$, $S_{\alpha,\beta}$ and $S_{\alpha',\beta'}$ are conjugate.
\item[(ii)] If $\beta=B_{n,l}^{U}(\alpha)$ and $\beta'=B_{n,l}^{U}(\alpha')$, then $S_{\alpha,\beta}$ and $S_{\alpha',\beta'}$ are conjugate.
\item[(iii)] If $\beta=B_{n,l}^{L}(\alpha)$ and $\beta'=B_{n,l}^{L}(\alpha')$, then $S_{\alpha,\beta}$ and $S_{\alpha',\beta'}$ are conjugate.
\end{itemize}
\end{proposition}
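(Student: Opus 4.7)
The plan is to construct an explicit homeomorphism $h:[0,1]\to[0,1]$ satisfying $h\circ S_{\alpha,\beta}=S_{\alpha',\beta'}\circ h$, by the same recipe in all three cases. The central observation is that the combinatorial order on $[0,1]$ of the periodic points, of the discontinuity, and of the pre-images of $0$ is determined entirely by the rational characteristic sequence $\{k_m\}$ associated with $(n,l)$, which does not depend on the particular parameter inside $D_{n,l}$. Thus $S_{\alpha,\beta}$ and $S_{\alpha',\beta'}$ share a common combinatorial skeleton from which a piecewise-affine conjugacy can be read off.

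For case (i), first I would mark in $[0,1]$ the $n$ periodic points $p_i$ given by \eqref{Ai}, the $n-1$ pre-images $q_i=S_{\alpha,\beta}^{-i}(0)$ given by \eqref{ioz}, and the endpoints $0,1$; note that the discontinuity $x^{*}=(1-\beta)/\alpha$ coincides with $q_1$. Do the same for the primed system, obtaining $p_i'$ and $q_i'$. The palindromic and cyclic identities of Proposition \ref{PoGS}, together with the inequalities defining $D_{n,l}$, force the correspondence $p_i\mapsto p_i'$, $q_i\mapsto q_i'$, $0\mapsto 0$, $1\mapsto 1$ to be an order-preserving bijection of the two marked sets. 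Define $h$ on the marked set by this correspondence and extend affinely on each of the resulting subintervals. The conjugacy relation holds on the marked set because both systems permute their marked points in the same combinatorial pattern; it propagates to each subinterval because every branch of $S_{\alpha,\beta}$ carries such a subinterval affinely onto another subinterval bounded by marked points, and an affine map of an interval is determined by its endpoint values.

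Cases (ii) and (iii) use the same recipe with one modification: at $\beta=B^{U}_{n,l}(\alpha)$ one of the periodic points coincides with $x^{*}=q_1$, and at $\beta=B^{L}_{n,l}(\alpha)$ one coincides with $0$. The same collapses occur for the primed system by the very definitions of $B^{U}_{n,l}$ and $B^{L}_{n,l}$, so the reduced marked set on each side is still in order-preserving correspondence and the affine interpolation still produces a homeomorphism satisfying the conjugacy relation. Alternatively one could derive these boundary cases as limits of case (i) by checking continuous dependence of $h$ on the parameters, but the direct construction seems more transparent.

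The step I expect to be the main obstacle is the combinatorial lemma that the correspondence of marked sets is indeed order-preserving for all parameters in $D_{n,l}$: one must show that the $2n+1$ points $\{0,1\}\cup\{p_i\}\cup\{q_i\}$ occur in $[0,1]$ in an order that depends only on the pair $(n,l)$, not on $(\alpha,\beta)$. I would prove this by direct algebraic comparison using the closed forms \eqref{BU}, \eqref{BL}, \eqref{Ai} and \eqref{ioz}, the defining inequalities of $D_{n,l}$, and the symmetry identities of Proposition \ref{PoGS} and Remark \ref{palindrome}. Once the ordering is established, the remaining verification that $h$ intertwines the two dynamics is essentially bookkeeping on each affine piece.
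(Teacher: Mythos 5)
The paper does not actually prove this proposition---it is quoted from Hata's work---but its own machinery for such conjugacies is the proof of Theorem \ref{mainthm1}, which (applied with $f=S_{\alpha',\beta'}$, since that map satisfies (A1)--(A4)) yields item (i). Your identification of the combinatorial skeleton, and of the order-preservation lemma as the key combinatorial input, matches the paper's Lemmas \ref{lem0}--\ref{lem3}. The gap is in the extension step: a map that is affine on each subinterval cut out by the \emph{finite} marked set $\{0,1\}\cup\{p_i\}\cup\{q_i\}$ cannot conjugate $S_{\alpha,\beta}$ to $S_{\alpha',\beta'}$ when $\alpha\neq\alpha'$. Concretely, each periodic point $p_i$ lies in the interior of a partition interval, $S^n$ acts near $p_i$ as an affine contraction of ratio $\alpha^n$ fixing $p_i$, and $S'^n$ acts near $p_i'$ with ratio $\alpha'^n$; if $h$ is affine with slope $c$ on a one-sided neighbourhood $[a,p_i]$ (which $S^n$ maps into itself), then $h\circ S^n$ has slope $c\alpha^n$ there while $S'^n\circ h$ has slope $\alpha'^n c$, so the intertwining relation forces $\alpha^n=\alpha'^n$. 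Equivalently, a conjugacy must send the geometric orbit $S^{in}(0)\to p_1$ (ratio $\alpha^n$) to the geometric orbit $S'^{in}(0)\to p_1'$ (ratio $\alpha'^n$), and an affine map preserves ratios of successive gaps. A related symptom is that your marked set is not forward invariant ($S(0)=\beta$ and $S(1)=\alpha+\beta-1$ are not marked), so the claim that ``the conjugacy relation holds on the marked set'' is not even well-posed before $h$ is extended, and the affine interpolation gives no reason for $h(\beta)=\beta'$.

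The repair is the one the paper uses in proving Theorem \ref{mainthm1}: enlarge the matched set to include the entire forward orbits $\{S^i(0)\}_{i\ge 0}$ and $\{S^i(1)\}_{i\ge 0}$ (setting $h(S^i(0))=S'^i(0)$ and $h(S^i(1))=S'^i(1)$), choose $h$ \emph{arbitrarily} as a homeomorphism on one fundamental domain such as $[S^{-(n-1)}(0),S(0)]$ (and likewise for the orbit of $1$), and propagate it to the rest of the interval by the dynamical recursion $h\mapsto S'\circ h\circ S^{-1}$. This automatically accommodates the differing contraction rates, at the price of giving a conjugacy that is canonical only up to the choice on the fundamental domains. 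Your treatment of the boundary cases (ii) and (iii) as degenerations where a periodic point collides with $q_1$ or with $0$ is the right picture, but it inherits the same defect and needs the same fundamental-domain construction.
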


\section{Conjugacy with nonlinear piecewise monotonic increasing maps}\label{S3}

In this section, we show that nonlinear piecewise monotonic increasing maps satisfying some assumptions conjugate the linear contracting ones.

Let  $f:[0,1]\to[0,1]$ be a continuous map except with $x=c$ $(0<c<1)$, satisfying 
\begin{itemize}
\item[(A1)] $f(0)>f(1)$ (called {\it non-overlapping condition}),
\item[(A2)] $f(c-)=1$ and $f(c+)=0$,
\item[(A3)] if $x<y$, then $f(x)<f(y)$ for $x,y\in[0,c)$ or $x,y\in[c,1]$,
\item[(A4)] there exists an integer $n\geq 2$ such that the pre-image of zero $f^{-i}(0)$ is in $[0,1)$ for $i=0,1,\cdots,n-1$ and $f^{-n}(0)=\emptyset$ .
\end{itemize}

\begin{remark}
In the above setting, we note that the pre-image for any point is unique if it exists. Then, we often use $f^{-i}(x)$ as the point iterating $x$ by the inverse map $f^{-i}$.
\end{remark}
\begin{remark}
Clearly if $x\in(f(1),f(0))$, then $f^{-1}(x)=\emptyset$. Moreover, 
since $f^{-1}(0)=c$, above assumption (A4) can be written by a pre-image of discontinuous point $c$ such as  
\begin{itemize}
\item[(A4)'] there exists an integer $n\geq 2$ such that
 $f^{-i}(c)\in [0,f(1))\cup[f(0),1)$ for $i=0,1,\cdots,n-2$ and $f^{-(n-1)}(c)\in(f(1),f(0))$ .
\end{itemize}
This form is used in the proof of Theorem \ref{mainthm2}.
\end{remark}

The main result in this section is the next theorem which conclude any nonlinear system satisfying (A1)-(A4) conjugates some linear systems.

\begin{theorem}\label{mainthm1}
  Let $f:[0,1]\to[0,1]$ be a continuous map except with $x=c$ $(0<c<1)$, satisfying (A1)-(A4). Then $f$ conjugates $S_{\alpha,\beta}$ with $(\alpha,\beta)\in int(D_{n,l})$.
\end{theorem}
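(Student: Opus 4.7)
The plan is to extract from $f$ a combinatorial type $(n,l)$, pick any $(\alpha,\beta)\in\mathrm{int}(D_{n,l})$, and construct a conjugating homeomorphism $h$ matching the two backward orbits of $0$. Put $x_i:=f^{-i}(0)$ for $i=0,\ldots,n-1$; by (A2) and (A4) these are $n$ distinct points of $[0,1)$ with $x_0=0$, $x_1=c$, and $f(x_{i+1})=x_i$. Let $\sigma$ be the spatial-ordering permutation of $\{0,\ldots,n-1\}$, normalised by $\sigma(0)=0$, so that $x_{\sigma(0)}<x_{\sigma(1)}<\cdots<x_{\sigma(n-1)}$. Under (A1)-(A3), $f$ lifts to a monotone, degree-one, orientation-preserving circle map with a single jump, and the finiteness of the backward orbit of length $n$ should force $\sigma$ to be a rigid cyclic shift $\sigma(i)\equiv iq\pmod n$ for a unique $q\in Pr(n)$. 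Define $l:=(n-q)\bmod n$.

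For any $(\alpha,\beta)\in\mathrm{int}(D_{n,l})$, Proposition \ref{keypro3} yields the explicit formula \eqref{ioz} for $y_i:=S_{\alpha,\beta}^{-i}(0)$, and a direct computation using Proposition \ref{PoGS} shows that the spatial-ordering permutation of $\{y_i\}$ is the same $\sigma$, with $y_0=0$ and $y_1=c':=(1-\beta)/\alpha$. Set $h(x_i):=y_i$ and $h(1):=1$; since the orderings coincide, $h$ extends to an orientation-preserving homeomorphism of $[0,1]$ sending $[0,c)$ to $[0,c')$ and $[c,1]$ to $[c',1]$ by any monotone interpolation on each of the $n$ gaps cut out by the partition points. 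To promote this to a genuine conjugacy, refine $h$ inductively on each subinterval $I$ of the partition by the rule $h|_I\leftarrow S_{\alpha,\beta}^{-1}\circ h\circ f|_I$, which is well-defined because $f|_I$ and $S_{\alpha,\beta}|_{h(I)}$ are monotone homeomorphisms onto their images. The iteration stabilises to a homeomorphism satisfying $h\circ f=S_{\alpha,\beta}\circ h$, because by (A1) the images of the two monotone branches are disjoint, so every point of $[0,1]$ is uniquely pinned down by its symbolic itinerary in $\{[0,c),[c,1]\}$, and these itineraries coincide with those in $\{[0,c'),[c',1]\}$ by the matching of $\sigma$.

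The crux is the rigidity claim in the first paragraph: that the spatial permutation $\sigma$ of the preimages of $0$ is a rigid cyclic shift. The classical rotation-number arguments for circle homeomorphisms assume continuity, so here one must exploit the non-overlap condition (A1) together with monotonicity (A3) to rule out alternative orderings and conclude $\sigma(i)\equiv iq\pmod n$. Once this rigidity is secured, matching with the linear model is a concrete verification using \eqref{ioz} and Proposition \ref{PoGS}, and the construction of $h$ reduces to gluing orientation-preserving homeomorphisms across the common partition structure.
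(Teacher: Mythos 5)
Your overall architecture --- extract the combinatorial type $(n,l)$ from the backward orbit of $0$, match it with the linear model via Proposition \ref{keypro3}, and glue a homeomorphism along the resulting partitions --- is the same as the paper's, but two steps you lean on are not established, and one of them is false as stated. First, the ``rigidity'' of the spatial permutation $\sigma$ is exactly the content you cannot skip: you say the finiteness of the backward orbit ``should force'' $\sigma(i)\equiv iq\pmod n$ and you flag it yourself as the crux, but you give no argument. The paper devotes Lemmas \ref{lem0}--\ref{lem2} to precisely this: monotonicity on each branch plus the non-overlap condition forces the orbit relations \eqref{or1}--\eqref{or8} among the preimages, coprimality of $n$ with the number $l$ of preimages lying in $[c,1]$ follows from a divisibility argument, and the ordering of $\{f^{-i}(0)\}$ is then forced to agree with that of $\{S_{\alpha,\beta}^{-i}(0)\}$ because $S_{\alpha,\beta}$ itself satisfies (A1)--(A4) with the same $(n,l)$. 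Without this, your choice of $l$ and the matching of partitions is unsupported.

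Second, and more seriously, your mechanism for upgrading the interpolated $h$ to a conjugacy does not work. The iteration $h\leftarrow S_{\alpha,\beta}^{-1}\circ h\circ f$ gives $h_k=S_{\alpha,\beta}^{-k}\circ h_0\circ f^k$; since $S_{\alpha,\beta}$ is a contraction, $S_{\alpha,\beta}^{-k}$ expands at rate $\alpha^{-k}$ while $f^k(x)$ collapses onto the attracting periodic orbit, so near a periodic point $q$ of $S_{\alpha,\beta}$ one is asking $\alpha^{-k}\bigl(h_0(f^k(x))-q\bigr)$ to converge, which is a linearization-type condition that generically fails. Your justification --- that points are ``uniquely pinned down by symbolic itinerary'' --- is false for contracting maps: all points approaching the attracting periodic orbit from the same side share the same eventual itinerary, so itineraries do not separate points and the conjugacy cannot be canonical. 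The paper handles this by making an \emph{arbitrary} choice of homeomorphism on a fundamental interval $[S^{-(n-1)}(0),S^{1}(0)]$ (and likewise for the orbit of $1$), pushing it forward along the dynamics via $h_{-(n-m)}:=f\circ h_{-(n-m+1)}\circ S^{-1}$, and sending periodic points to periodic points; the freedom in that initial choice reflects exactly the non-uniqueness your ``stabilisation'' would contradict. You need to replace your second paragraph with a fundamental-domain construction of this kind.
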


To prove the theorem, we first prepare the following lemmas.

\begin{lemma}\label{lem0}
Let $l$ be a number of elements of the set $\{i\ |\ f^{-i}(0)\in[c,1] \}$. Let $\{x_i\}_{i=1}^{n-l}$ and $\{y_i\}_{i=1}^{l}$ be $n$ points of pre-images of zero, $\{f^{-i}(0)\}_{i=0}^{n-1}$, such that,
$$
x_1 < x_2 < \cdots < x_{n-l} < y_1 < y_2 < \cdots < y_l.
$$
If $n>2l$, then the following orbit relations hold;
\begin{align}
f^{-1}(x_i) &= y_i & \text{for} \quad\quad i&=1,\cdots,l,\label{or1}\\
f^{-1}(x_i)&= \emptyset & \text{for} \quad\quad i&=l+1,\label{or2}\\
f^{-1}(x_i) &= x_{i-l} & \text{for} \quad\quad i&=l+2,\cdots,n-l,\label{or3}\\
f^{-1}(y_i) &= x_{n-2l+i} & \text{for} \quad\quad i&=1,\cdots,l.\label{or4}
\end{align}
If $n<2l$, then the following orbit relations hold;
\begin{align}
f^{-1}(x_i) &= y_i & \text{for}\quad\quad  i&=1,\cdots,l,\label{or5}\\
f^{-1}(y_i) &= y_{i+l} & \text{for}\quad\quad  i&=1,\cdots,2l-n-2,\label{or6}\\
f^{-1}(y_i) &= \emptyset & \text{for}\quad\quad  i&=2l-n-1,\label{or7}\\
f^{-1}(y_i) &= x_{n-2l+i} & \text{for}\quad\quad  i&=2l-n,\cdots,l.\label{or8}
\end{align}
\end{lemma}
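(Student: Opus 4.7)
The plan is to leverage two elementary facts. First, the backward orbit $p_i := f^{-i}(0)$ forms a chain $p_{i+1} = f^{-1}(p_i)$ for $0 \leq i \leq n-2$, which terminates with $p_{n-1}$ in the gap $G := (f(1), f(0))$ (by (A4)$'$). Second, $f^{-1}$ is piecewise monotone-increasing: the right-branch inverse sends $[0, f(1)]$ onto $[c, 1]$, and the left-branch inverse sends $[f(0), 1)$ onto $[0, c)$. Combining these yields the local rule $p_{i+1} \in [c, 1] \Leftrightarrow p_i \in [0, f(1)]$, and $p_{i+1} \in [0, c) \Leftrightarrow p_i \in [f(0), 1)$, with $p_i \in G$ precisely when $i = n-1$.

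First I would run a counting argument to locate $p_{n-1}$. The set $P := \{p_0, \ldots, p_{n-1}\}$ decomposes simultaneously across $c$ as $\{x_1, \ldots, x_{n-l}\} \sqcup \{y_1, \ldots, y_l\}$, and across the gap as $(P \cap [0, f(1)]) \sqcup \{p_{n-1}\} \sqcup (P \cap [f(0), 1))$. Using the local rule above together with $p_0 = 0 \in [0, c)$ and $p_j \in [c,1]$ for exactly $l$ indices $j \in \{1,\ldots,n-1\}$, one obtains $|P \cap [0, f(1)]| = l$ and $|P \cap [f(0), 1)| = n - l - 1$. Checking the three possible geometries of the gap (entirely inside $[0, c)$, entirely inside $[c, 1]$, or straddling $c$) against these cardinalities then yields the dichotomy: $p_{n-1}$ is an $x_i$ when $n > 2l$, and a $y_j$ when $n < 2l$.

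Having pinned down the side of the gap, the orbit relations reduce to order preservation along the two monotonic inverse branches. In the case $n > 2l$: the $l$ smallest $x$'s lie in $[0, f(1)]$ and their preimages are exactly the $l$ $y$'s, so order preservation forces $f^{-1}(x_i) = y_i$ for $1 \le i \le l$, giving (\ref{or1}); the next $x$ in order is the gap point, so $p_{n-1} = x_{l+1}$, giving (\ref{or2}); and the remaining $n - 2l - 1$ higher $x$'s followed by all $l$ $y$'s fill $P \cap [f(0), 1)$ as a single increasing sequence whose image under $f^{-1}$ is $x_2, x_3, \ldots, x_{n-l}$ in order, yielding (\ref{or3}) and (\ref{or4}). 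The case $n < 2l$ is handled by the symmetric argument, with the gap landing among the $y$'s and the roles of the two monotonic branches swapped. The main obstacle will be the geometric case analysis in the middle step: one must verify that the strict inequality $n > 2l$ (respectively $n < 2l$) forces the position of the gap relative to $c$ regardless of the precise configuration of $f(0), f(1), c$. Once that compatibility check is settled, the remaining conclusions reduce to routine bookkeeping on order-preserving injections.
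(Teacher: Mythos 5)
Your proposal is correct and follows essentially the same route as the paper's proof: both rest on the order-preservation of the two monotone inverse branches of $f$ together with the observation that the unique point of the backward orbit lying in the gap $(f(1),f(0))$ is the $(l+1)$-st smallest, hence equals $x_{l+1}$ when $n>2l$. Your counting step $|P\cap[0,f(1)]|=l$ simply makes explicit what the paper treats as immediate from monotonicity, so the two arguments coincide in substance.
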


\begin{proof}
In the case $n>2l$, since $x_1=0$, $f^{-1}(x_1)=y_1=c$ and $f^{-1}(y_l)=x_{n-l}$, the relations \eqref{or1} and \eqref{or4} are immediately hold because of the monotonicity of the map. Then, one of $x_i$, $i=l+1,l+2,\cdots,n-l$ is mapped to $\emptyset$, and the others are mapped to $x_k$, $k=2,\cdots,n-2l$ by $f^{-1}$. By the monotonicity, the relations \eqref{or2} and \eqref{or3} must hold. 

For the case $n<2l$, we can show similarly by substituting the role of $x_i$ and $y_i$.
\end{proof}
In order to help understanding these orbit relations, we show the example of our target map with $(n,l)=(9,2)$ in Figure \ref{fig1}.

\begin{figure}[tpb]
\begin{center}
\includegraphics[bb=0 0 300 300, width=7cm]{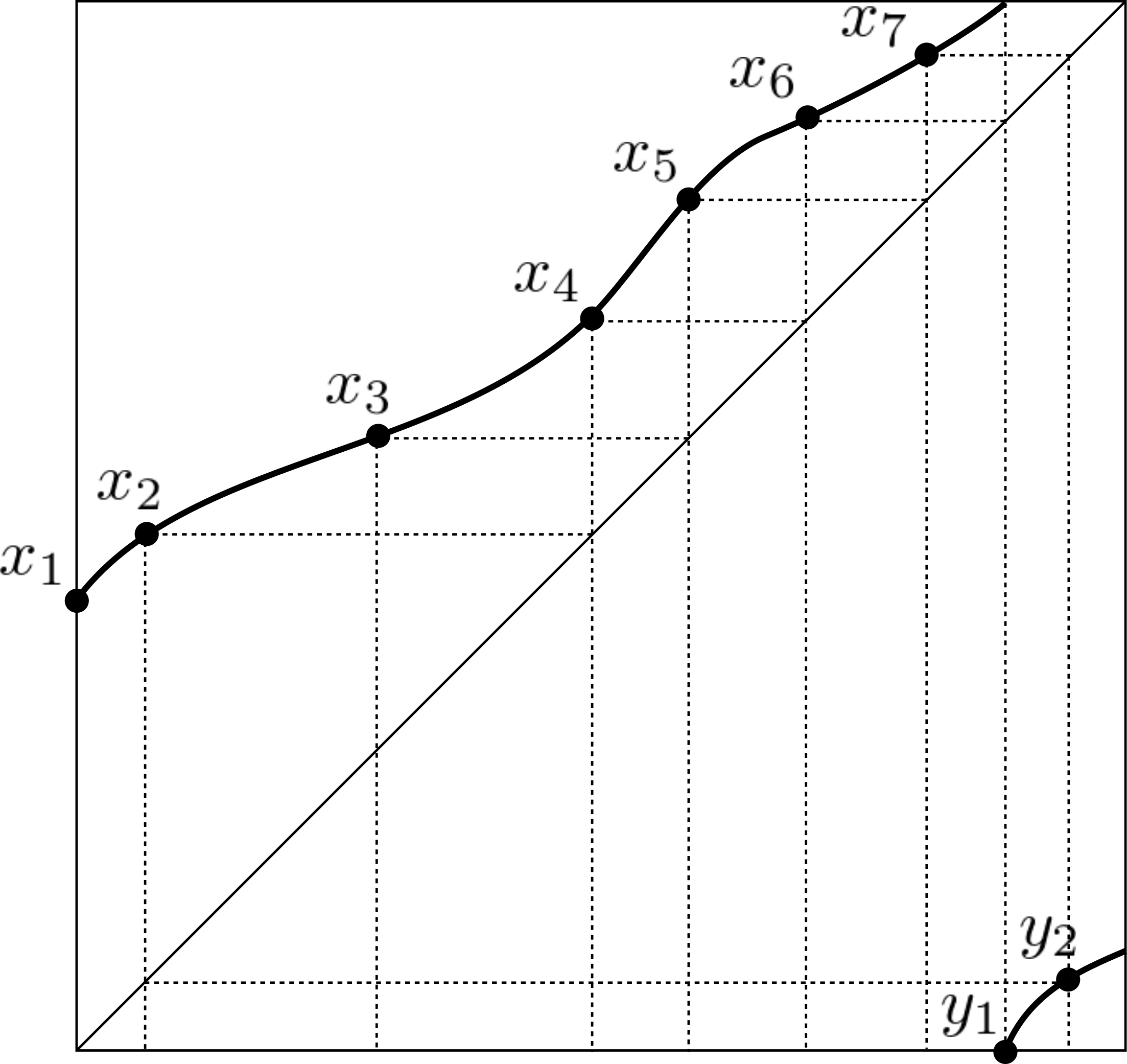}
\caption{Example of our target map with $(n,l)=(9,2)$.}
\label{fig1}
\end{center}
\end{figure}

\begin{lemma}\label{lem1}
Let $l$ be a number of elements of the set $\{i\ |\ f^{-i}(0)\in[c,1] \}$. Then $l$ and $n$ are relatively prime numbers.
\end{lemma}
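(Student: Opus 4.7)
The plan is to extract from Lemma \ref{lem0} that $f^{-1}$ acts on the ordered index set of the $n$ pre-images of $0$ as translation by $-l$ modulo $n$, and then to use the fact that this orbit has length $n$ to force $\gcd(n,l) = 1$.

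First, I will relabel the $n$ distinct pre-images $\{f^{-i}(0)\}_{i=0}^{n-1}$ in increasing order as $z_1 < z_2 < \cdots < z_n$, compatibly with Lemma \ref{lem0}: $z_i = x_i$ for $i = 1,\ldots,n-l$ and $z_{n-l+j} = y_j$ for $j = 1,\ldots,l$, so in particular $z_1 = 0$. I then define $\phi : \{1,\ldots,n\} \to \{1,\ldots,n\} \cup \{\emptyset\}$ by $\phi(k) = j$ when $f^{-1}(z_k) = z_j$ and $\phi(k) = \emptyset$ otherwise.

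Second, I will read off from Lemma \ref{lem0} that, modulo $n$, $\phi(k) \equiv k - l$ whenever $\phi(k)$ is defined. In the regime $n > 2l$, relations \eqref{or1}--\eqref{or4} give $\phi(i) = n-l+i \equiv i - l \pmod{n}$ for $i = 1,\ldots,l$, $\phi(i) = i - l$ for $i = l+2,\ldots,n-l$, and $\phi(n-l+i) = n-2l+i \equiv (n-l+i) - l \pmod{n}$ for $i = 1,\ldots,l$, with the exceptional index $l+1$ being sent to $\emptyset$. The regime $n < 2l$ is handled analogously from \eqref{or5}--\eqref{or8}, yielding the same shift rule with a single exceptional index going to $\emptyset$.

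Third, starting from $z_1 = 0$ and iterating $f^{-1}$ traces the indices $1,\ 1-l,\ 1-2l,\ \ldots,\ 1-(n-1)l \pmod{n}$ in $n$ consecutive steps before finally landing in $\emptyset$. These $n$ indices label the $n$ distinct pre-images $f^{-i}(0)$ for $i = 0,1,\ldots,n-1$, hence are pairwise distinct in $\mathbb{Z}/n\mathbb{Z}$. Equivalently, the orbit of $1$ under translation by $-l$ has cardinality $n$, i.e.\ $-l$ generates $\mathbb{Z}/n\mathbb{Z}$, which is equivalent to $\gcd(n,l) = 1$.

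The main obstacle will be the bookkeeping in the second step: I need to verify in each regime that the several piecewise formulas of Lemma \ref{lem0} really do collapse to the single shift rule $\phi(k) \equiv k - l \pmod{n}$, and that the unique index at which $\phi$ takes the value $\emptyset$ is precisely the one reached after $n-1$ iterations of the shift starting from $1$. Once these alignments are confirmed, the group-theoretic conclusion is immediate.
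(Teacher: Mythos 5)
Your proof is correct and takes essentially the same route as the paper: both extract from Lemma \ref{lem0} that $f^{-1}$ acts on the order-indices of the $n$ pre-images of $0$ as translation by $-l$ modulo $n$, and both conclude from the fact that the backward orbit of $0$ must exhaust all $n$ points before hitting $\emptyset$. The paper packages this as a contradiction with a double-index decomposition $x_i^j,y_i^j$ (the invariance of the index $i$ there is precisely the statement that the orbit stays in one coset of the subgroup generated by $l$), while you state the generator condition in $\mathbb{Z}/n\mathbb{Z}$ directly; this is a cleaner formulation of the same idea, and the alignment issue you flag at the end is in fact automatic from (A4), since the exceptional index cannot be reached before step $n-1$.
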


\begin{proof}

When $n$ is a prime number, $n$ and $l$ are always relatively prime. Then we assume that $n=kn'$ and $l=kl'$ $(k>1, n'\geq 1, l'\geq 1)$ where $n'$ and $l'$ are relatively prime. Clearly $f^0(0)=0\in[0,c)$ and $f^{-1}(0)=c\in [c,1]$.

In the case $n>2l$, let $x_1^1<\cdots<x_{k}^1<x_1^2<\cdots<x_k^2<\cdots<x_1^{n'-l'}<\cdots<x_k^{n'-l'}<y_1^1<\cdots<y_k^1<\cdots<y_1^{l'}<\cdots<y_k^{l'}$ be points in $\{f^{-i}(0)\}_{i=0}^{n-1}$ such that $x_i^j\in[0,c)$ for $i=1,\cdots,k$ and $j=1,\cdots,n'-l'$, and $y_i^j\in[c,1]$ for $i=1,\cdots,k$ and $j=1,\cdots,l'$.

Consider pre-images of all $x_i^j$ and $y_i^j$ by $f$. First, by the orbit relation \eqref{or1} and \eqref{or4}, we have
\begin{eqnarray}\label{pieq1}
f^{-1}(x_i^j)=y_i^{j}\ \ \ \text{for}\ \ \  i=1,\cdots,k\ \  \text{and}\ \  j=1,\cdots,l',
\end{eqnarray}
\begin{eqnarray}\label{pieq2}
f^{-1}(y_i^j)=x_i^{n'-2l'+j}\ \ \ \text{for}\ \ \  i=1,\cdots,k\ \  \text{and}\ \  j=1,\cdots,l'.
\end{eqnarray}
and for reminding $x_2^{l'+1},\cdots, x_k^{l'+1}$ and $x_i^j$ for $i=1,\cdots,k$ and $j=l'+2,\cdots,n'-l'$, we have
\begin{eqnarray}\label{pieq4}
f^{-1}(x_i^j)=x_i^{j-l'}\quad \text{by \eqref{or3}}.
\end{eqnarray}
Moreover, for $x_1^{l'+1}$, we have
\begin{eqnarray}\label{pieq3}
x_1^{l'+1}=f^{-(n-1)}(0)\quad \text{by \eqref{or2}},
\end{eqnarray}
that is, there is no pre-image for  $x_1^{l'+1}$ by $f$. However, since the index $i$ of $x_i^j$ or $y_i^j$ is invariant by these rules \eqref{pieq1}-\eqref{pieq2} of iteration, the pre-image of $x_1^1(=0)$ traces on only $x_1^\cdot$ and $y_1^\cdot$, and reaches to $x_1^{l'+1}$. This contradicts to the assumption that all $x_i^j$ and $y_i^j$ are elements of pre-images of $0$, $\{f^{-i}(0)\ |\ i=0,1,\cdots,n-1\}$.

For the case $n<2l$, it can be shown similarly by substituting the role of $x_i^j$ and $y_i^j$.
\end{proof}

\begin{lemma}\label{lem2}
Let $l$ be a number given in previous Lemma \ref{lem1}. Let $\rho$ be a permutation which arranges $\{S_{\alpha,\beta}^{-i}(0)\}_{i=0}^{n-1}$ in increasing order, that is,
$$
S_{\alpha,\beta}^{-\rho(0)}(0)<S_{\alpha,\beta}^{-\rho(1)}(0)<\cdots<S_{\alpha,\beta}^{-\rho(n-1)}(0).
$$
where $(\alpha,\beta)\in int(D_{n,l})$.
Then, $\rho$ permutes $\{f^{-i}(0)\}_{i=0}^{n-1}$ in increasing order, that is,
$$
f^{-\rho(0)}(0)<f^{-\rho(1)}(0)<\cdots<f^{-\rho(n-1)}(0).
$$
\end{lemma}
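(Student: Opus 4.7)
The plan is to apply Lemma~\ref{lem0} simultaneously to $f$ and to $S_{\alpha,\beta}$, observing that the orbit relations \eqref{or1}--\eqref{or8} depend only on the pair $(n,l)$ and not on the specific map. Both backward orbits begin at the common point $0$, which is the smallest element of either $\{f^{-i}(0)\}_{i=0}^{n-1}$ or $\{S_{\alpha,\beta}^{-i}(0)\}_{i=0}^{n-1}$, so the same combinatorial recursion propagates from the same initial condition and must produce identical sequences of sorted positions.

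First I would verify that $S_{\alpha,\beta}$ itself satisfies the hypotheses (A1)--(A4) with the same integer $n$ and, crucially, that \emph{exactly} $l$ of its pre-images of $0$ lie in $[c_{\alpha,\beta},1]$, where $c_{\alpha,\beta}=(1-\beta)/\alpha$ is its discontinuous point. Conditions (A1)--(A3) are immediate from the formula $S_{\alpha,\beta}(x)=\alpha x+\beta\pmod{1}$, and (A4) is the content of Proposition~\ref{keypro3}. For the count, unfolding the recursion $S_{\alpha,\beta}^{-i}(0)=(S_{\alpha,\beta}^{-(i-1)}(0)+k_{n-i}-\beta)/\alpha$ implicit in formula \eqref{ioz} shows that $S_{\alpha,\beta}^{-i}(0)\in[c_{\alpha,\beta},1)$ if and only if the integer carry $k_{n-i}$ equals $1$; the count is then
\begin{eqnarray}
\#\{i\in\{1,\ldots,n-1\}:S_{\alpha,\beta}^{-i}(0)\in[c_{\alpha,\beta},1)\}=\sum_{m=0}^{n-1}k_m=\left[\frac{nl}{n}\right]-[0]=l, \nonumber
\end{eqnarray}
using the telescoping cancellation in the definition \eqref{gs} together with the fact that each $k_m\in\{0,1\}$ since $0<l<n$.

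With the value of $l$ matched across the two systems, Lemma~\ref{lem0} applies to both and produces a single transition map $\Phi_{n,l}$ on sorted indices $\{1,\dots,n\}$: given the sorted position $p$ of a pre-image of $0$, $\Phi_{n,l}(p)$ is the sorted position of its own pre-image under the corresponding inverse map. Because $0$ occupies sorted position~$1$ in both systems, the sequence $1,\Phi_{n,l}(1),\Phi_{n,l}^2(1),\ldots,\Phi_{n,l}^{n-1}(1)$ simultaneously records the sorted positions of $f^{-i}(0)$ and of $S_{\alpha,\beta}^{-i}(0)$ for $i=0,\ldots,n-1$. Inverting this sequence yields the permutation $\rho$, which therefore sorts both orbits into increasing order.

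The main obstacle is the first step: tying the algebraic quantity $k_{n-i}$ to the geometric condition $S_{\alpha,\beta}^{-i}(0)\in[c_{\alpha,\beta},1)$ requires careful tracking of which integer is subtracted when inverting the $\mathrm{mod}\,1$ operation, and relies on the identity $k_{n-1}=1$ from Proposition~\ref{PoGS}. Once the counting is in place, the remainder of the argument is a routine induction on $i$ driven entirely by the combinatorics encoded in Lemma~\ref{lem0}.
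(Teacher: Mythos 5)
Your proposal is correct and follows essentially the same route as the paper: both reduce the claim to the observation that the orbit relations of Lemma~\ref{lem0} depend only on the pair $(n,l)$, so the sorted positions of the backward orbit of $0$ are determined purely combinatorially and therefore agree for $f$ and for $S_{\alpha,\beta}$ with $(\alpha,\beta)\in int(D_{n,l})$. The only difference is that you explicitly verify, via the identification of $k_{n-i}$ with the branch choice and the telescoping sum $\sum_{m=0}^{n-1}k_m=l$, that the linear map has exactly $l$ pre-images of $0$ in $[c,1)$ --- a matching condition the paper's proof leaves implicit by appealing to ``the facts in previous section.''
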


\begin{proof}
From Lemma \ref{lem0}, if two maps $f$ and $g$ satisfying the assumption (A1) - (A4) with same number $n$ and $l$, then the orders of points $\{f^{-i}(0)\}_{i=0}^{n-1}$ and $\{g^{-i}(0)\}_{i=0}^{n-1}$ coincide. Especialy, the linear map $S_{\alpha,\beta}$ with $(\alpha,\beta)\in D_{n,l}$ also satisfies the assumption (A1) - (A4) from the facts in previous section. Therefore the orders of points $\{f^{-i}(0)\}_{i=0}^{n-1}$ and $\{S_{\alpha,\beta}^{-i}(0)\}_{i=0}^{n-1}$ coincide for any maps $f$.
\end{proof}

\begin{lemma}\label{lem3}
Let $\{I_f^i\}_{i=1}^{n}$ be partitions of $[0,1]$ determined by $\{f^{-i}(0)\}_{i=1}^{n-1}$ with point 0 and 1 such that each $I_f^i$ is closed interval and $\cup_i I_f^i=[0,1]$. Then, $f$ has a periodic point with period $n$, and these $n$ points belongs to an interior of each interval $I_f^i$ one each. 
\end{lemma}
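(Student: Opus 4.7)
The plan is to show that $f$ cyclically permutes the $n$ partition intervals and then apply the intermediate value theorem to $f^n$ restricted to each. Writing $I_f^i=[z_{i-1},z_i]$ with $z_0=0<z_1<\cdots<z_{n-1}<z_n=1$ and $\{z_1,\dots,z_{n-1}\}=\{f^{-i}(0)\}_{i=1}^{n-1}$, the first step is to describe the action of $f$ on these endpoints: every $z_j=f^{-k}(0)$ (with $1\le k\le n-1$) is sent to the partition point $f^{-(k-1)}(0)$, while $0$ and $1$ go to $f(0)$ and $f(1)$, the upper and lower boundaries of the gap $(f(1),f(0))$. By (A4) the unique partition point inside this gap is $f^{-(n-1)}(0)$, so $f(0)$ and $f(1)$ lie in the interiors of the two intervals adjacent to it; combined with the ordering result of Lemma~\ref{lem2}, this induces a permutation $\sigma$ of $\{1,\dots,n\}$ with $f(I_f^i)\subseteq I_f^{\sigma(i)}$, equality for every $i$ except $i=1,n$ (whose images are proper subintervals of $I_f^{\sigma(1)}$ and $I_f^{\sigma(n)}$, respectively).

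Next, by Lemma~\ref{lem0} the induced permutation $\sigma$ is the shift by $l$ on $\{1,\dots,n\}$, and since $\gcd(n,l)=1$ by Lemma~\ref{lem1}, $\sigma$ is a single $n$-cycle. Consequently $\sigma^n=\mathrm{id}$ and $f^n(I_f^i)\subseteq I_f^i$; at each intermediate step the image $f^k(I_f^i)\subseteq I_f^{\sigma^k(i)}$ lies on one side of $c$ (with $c$ possibly at an endpoint but never in the interior), so $f^n$ is continuous and strictly increasing on $\mathrm{int}(I_f^i)$. To apply IVT to $g(x)=f^n(x)-x$, I would then establish the strict boundary inequalities $f^n(z_{i-1})>z_{i-1}$ and $\lim_{x\to z_i^-}f^n(x)<z_i$. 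Writing $z_{i-1}=f^{-k}(0)$, iterating $f$ from $z_{i-1}$ reaches $0$ after $k$ steps and subsequently lands in interiors via $f^{k+j}(z_{i-1})=f^j(0)\in\mathrm{int}(I_f^{\sigma^j(1)})$; at $j=n-k$ one obtains $f^n(z_{i-1})\in\mathrm{int}(I_f^{\sigma^{n-k}(1)})=\mathrm{int}(I_f^i)$. A symmetric argument for the left-limit at $z_i$ uses that the orbit of $x\to z_i^-$ passes through $c^-$ and jumps to $f(c-)=1$, after which $f^j(1)\in\mathrm{int}(I_f^{\sigma^j(n)})$ lands in $\mathrm{int}(I_f^i)$ at the appropriate step.

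Finally, letting $p_i\in\mathrm{int}(I_f^i)$ be the fixed point of $f^n$ produced by IVT, the image $f(p_i)\in I_f^{\sigma(i)}$ is again fixed by $f^n$, and iterating—together with $\sigma$ being an $n$-cycle—assembles $\{p_i,f(p_i),\dots,f^{n-1}(p_i)\}$ into a single period-$n$ orbit that meets the interior of every $I_f^i$ exactly once. The main technical obstacle is the careful bookkeeping around the two defective intervals $I_f^1$ and $I_f^n$, whose images are only proper subsets of $I_f^{\sigma(1)}$ and $I_f^{\sigma(n)}$, together with the handling of left-limits through the discontinuity at $c$, where the iterate of $x\to z_i^-$ diverges from the pointwise iterate of $z_i$ and must be traced through the forward orbit of $1$ rather than of $0$.
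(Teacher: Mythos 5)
Your proof is correct in outline, but it takes a genuinely different route from the paper's. The paper disposes of existence in one line by citing Lemma~3.6 of Keener's paper (a finite pre-image set of $0$ with $n$ elements forces a period-$n$ point), and then proves the ``one point per interval'' distribution by contradiction: if two periodic points lay in the same $I_f^i$, the contracting property would force some $f^{-k}(0)$ to separate them, contradicting the fact that the partition is cut exactly along these pre-images. You instead give a self-contained construction: the partition intervals are cyclically permuted by $f$ (a single $n$-cycle, which you get from Lemmas~\ref{lem0} and~\ref{lem1}, though it also follows directly from the chain $f^{-(n-1)}(0)\mapsto\cdots\mapsto c\mapsto 0$ together with (A4)$'$ placing $f(0),f(1)$ in the two intervals adjacent to $f^{-(n-1)}(0)$), and then the intermediate value theorem applied to $f^n-\mathrm{id}$ on each interval, with the boundary signs read off from the forward orbits of $0$ and of $1$ (via the left limit $f(c-)=1$). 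Your version buys something real: it uses only (A1)--(A4) and no contraction hypothesis, which is consistent with the paper's own remark that the nonlinear map in Section~\ref{S3} need not be contracting, whereas the paper's proof of this lemma does invoke contraction. What it gives up is uniqueness: the paper's argument shows each $I_f^i$ contains \emph{exactly} one periodic point, while your IVT argument only produces \emph{at least} one period-$n$ orbit meeting each interval interior once (without contraction, $f^n$ could a priori have several fixed points in one interval). Since the lemma as stated only asserts the existence of such an orbit, your argument does establish the statement; just be aware that the stronger per-interval uniqueness is what the conjugacy construction in Theorem~\ref{mainthm1} implicitly leans on when it speaks of \emph{the} periodic point $p_f^i$ of $I_f^i$.
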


\begin{proof}
By Lemma 3.6 in \cite{Keener}, it has already known that if the set $\{f^{-i}(0)\ |\ i=0,1,\cdots\}$ is finite and a number of the set is $n$, then $f$ has a periodic point with period $n$. Thus we shall show that each interval $I_f^i$, $i=1,\cdots,n$, possesses only one of points of a periodic point.

Assume that there are two points $p_1$ and $p_2$ of the periodic point in some interval $I_f^i$. Since the map has contracting property, all periodic points are stable and unstable fixed points or periodic points do not exist. Then there exists $k$ such that $f^{-k}(0)$ must be between $p_1$ and $p_2$. This is contradiction since the partition $\{I_f^i\}_i$ is made by the points $\{f^{-i}(0)\}_{i=1}^{n-1}$. 
 
\end{proof}

\subsection*{Proof of Theorem \ref{mainthm1}}

For convenience, we write $S_{\alpha,\beta}$ by $S$. We shall construct the homeomorphism $H$ such that $f\circ H=H\circ S$. First, let $\{I_f^i\}_{i=1}^{n}$ and $\{I_{S}^i\}_{i=1}^{n}$ be partitions of $[0,1]$ determined by $\{f^{-i}(0)\}_{i=1}^{n-1}$ and $\{S^{-i}(0)\}_{i=1}^{n-1}$ respectively. Each subintervals $I_f^i$ and $I_S^i$ have a periodic point in its interior by Lemma \ref{lem3}, we denote the periodic points by $p_f^i$ and $p_S^i$ for $i=1,\cdots,n$. Then set 
\begin{eqnarray}
h(p_S^i)=p_f^i, \ \ \ i=1,\cdots,n.
\end{eqnarray}

Next, by Lemma \ref{lem2}, since the orders of $S^{-i}(0)$ and $f^{-i}(0)$ are corresponding, it is enable to set 
\begin{eqnarray}
h(S^{-i}(0))=f^{-i}(0), \ \ \ i=0,\cdots,n-1.
\end{eqnarray}

Considering a orbits of $0$ by $f$, $\{f^i(0)\}_{i=0}^\infty$, the sequence $f^k(0),f^{n+k}(0),f^{2n+k}(0),\cdots$ goes to the periodic point $p_f^k$ from left side of the periodic point in $I_f^k$ by the map $f^n$. On the other hand, considering a orbits of $1$ by $f$, $\{f^i(1)\}_{i=0}^\infty$, the sequence $f^k(1),f^{n+k}(1),f^{2n+k}(1),\cdots$ goes to the periodic point $p_f^k$ from right side of the periodic point in $I_f^k$ by the map $f^n$. Then we correspond each orbits, that is, 
$$ 
h(S^i(0))=f^i(0),\ \ h(S^i(1))=f^i(1), \ \ \ i=1,2,\cdots.
$$

Finally, we define the function $h$ between these points $\{S^{i}(0)\}_{i=-(n-1)}^{\infty}$ and $\{p_S^i\}_{i=1}^{n}$. Consider a further partition for each $I_S^i$, $i=1,\cdots,n$, by the points $S^{i+mn}(0)$ and $S^{i+mn}(1)$ for $m=0,1,\cdots$. We first define

$$
h_{-(n-1)}:[S^{-(n-1)}(0),S^{1}(0)]\to[f^{-(n-1)}(0),f^{1}(0)]
$$
as an arbitrary homeomorphism. Next we define $h_{-(n-2)}:[S^{-(n-2)}(0),S^{2}(0)]\to[f^{-(n-2)}(0),f^{2}(0)]$ by $h_{-(n-2)}:=f\circ h_{-(n-1)}\circ S^{-1}$. Since $f$, $S^{-1}$ and $h_{-(n-1)}$ are all bijective, continuous and monotonic increasing on each domain, $h_{-(n-2)}$ becomes homeomorphism. and the following diagram holds.

\[
  \begin{CD}
     [S^{-(n-1)}(0),S^{1}(0)] @>{S}>> [S^{-(n-2)}(0),S^{2}(0)] \\
  @V{h_{-(n-1)}}VV    @V{h_{-(n-2)}}VV  \\
     [f^{-(n-1)}(0),f^{1}(0)]   @>{f}>>  [f^{-(n-2)}(0),f^{2}(0)]
  \end{CD}
\]

Inductively, we define $h_{-(n-m)}:[S^{-(n-m)}(0),S^{m}(0)]\to[f^{-(n-m)}(0),f^{m}(0)]$ as $h_{-(n-m)}:=f\circ h_{-(n-m)}\circ S$ for $m=3,4,5,\cdots$ satisfying the following diagram.

\[
  \begin{CD}
     [S^{-(n-m+1)}(0),S^{m-1}(0)] @>{S}>> [S^{-(n-m)}(0),S^{m}(0)] \\
  @V{h_{-(n-m+1)}}VV    @V{h_{-(n-m)}}VV  \\
     [f^{-(n-m+1)}(0),f^{m-1}(0)]   @>{f}>>  [f^{-(n-m)}(0),f^{m}(0)]
  \end{CD}
\]

Similarly, we construct the homeomorophism by using the image of $1$. We define
$$
g_{-(n-1)}:[S^{-(n-1)}(1),S^{1}(1)]\to[f^{-(n-1)}(1),f^{1}(1)]
$$
as an arbitrary homeomorphism. Next we define $g_{-(n-2)}:[S^{-(n-2)}(1),S^{2}(1)]\to[f^{-(n-2)}(1),f^{2}(1)]$ by $g_{-(n-2)}:=f\circ g_{-(n-1)}\circ S^{-1}$. Since $f$, $S^{-1}$ and $g_{-(n-1)}$ are all bijective, continuous and monotonic increasing on each domain, $g_{-(n-2)}$ becomes homeomorphism, and the following diagram holds.

\[
  \begin{CD}
     [S^{-(n-1)}(1),S^{1}(1)] @>{S}>> [S^{-(n-2)}(1),S^{2}(1)] \\
  @V{g_{-(n-1)}}VV    @V{g_{-(n-2)}}VV  \\
     [f^{-(n-1)}(1),f^{1}(1)]   @>{f}>>  [f^{-(n-2)}(1),f^{2}(1)]
  \end{CD}
\]

Inductively, we define $g_{-(n-m)}:[S^{-(n-m)}(1),S^{m}(1)]\to[f^{-(n-m)}(1),f^{m}(1)]$ as $g_{-(n-m)}:=f\circ g_{-(n-m+1)}\circ S^{-1}$ for $m=3,4,5,\cdots$ satisfying the following diagram.

\[
  \begin{CD}
     [S^{-(n-m+1)}(1),S^{m-1}(1)] @>{S}>> [S^{-(n-m)}(1),S^{m}(1)] \\
  @V{g_{-(n-m+1)}}VV    @V{g_{-(n-m)}}VV  \\
     [f^{-(n-m+1)}(1),f^{m-1}(1)]   @>{f}>>  [f^{-(n-m)}(1),f^{m}(1)]
  \end{CD}
\]

Finally, defining the map $h$ by
\begin{eqnarray}
H=\begin{cases}
h_i\ \ \  {\rm on}\ \ \ [S^{i-1}(0),S^{n+i-1}(0)]\\
g_i\ \ \  {\rm on}\ \ \ [S^{i-1}(1),S^{n+i-1}(1)]
\end{cases},\ \ i=-(n-1),-(n-2),\cdots,\nonumber
\end{eqnarray}
we obtained the homeomorophism $H$ satisfying $f\circ H=H\circ S$. 

\qed

\section{Periodic structure for family of nonlinear contracting maps}\label{S4}
In this section, we show that the family of nonlinear contracting maps constructed as follows possesses Farey structure for the parameter space.

Let $g:\mathbb{R}\to\mathbb{R}$ be a continuous monotonic increasing with $g(0)=0$ and $g(c^*)+1=c^*$. Assume that $g$ has contracting property, that is, there exists $\kappa<1$ such that 
$$
|g(x)-g(y)|\leq\kappa|x-y|\ \ \ \text{for}\ \ \ x,y\in(0,c^*),
$$
Define $h(x):=g(x)+1$. For $c\in (0,c_*)$, the map  
\begin{eqnarray}
T_c(x)=\begin{cases}
h(x)  & ({\rm if}\ x<c)\\
g(x)  & ({\rm if}\  x\geq c)
\end{cases}\label{Tc}
\end{eqnarray}
becomes a transformation on $[g(c),h(c)]$ which is a continuous monotonic increasing except with $x=c$. In figure \ref{fig2}, we draw the illustration of constructing the family of maps $\{T_c\}_{c\in (0,c^*)}$.
\begin{figure}[htpb]
\begin{center}
\includegraphics[bb=0 0 300 300, width=6cm]{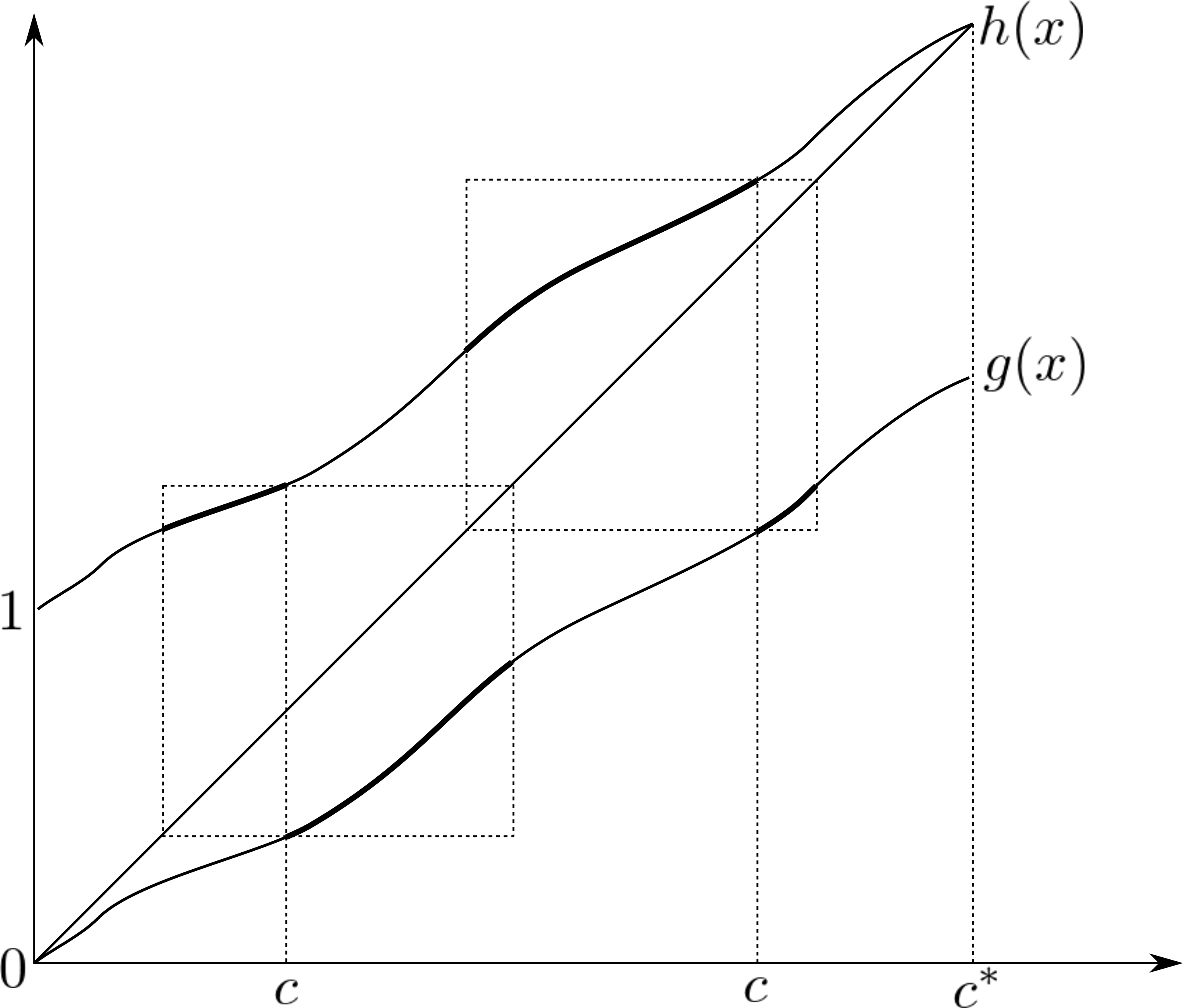}
\caption{The illustration of constructing the family of maps $\{T_c\}_{c\in(0,c^*)}$.}
\label{fig2}
\end{center}
\end{figure}

Next lemma implies that the transformation $T_c$ satisfies non-overlapping condition (A1). 

\begin{lemma}\label{nonoverlap}
The inequality $h\circ g(c)>g\circ h(c)$ holds for any $c\in(0,c^*)$.
\end{lemma}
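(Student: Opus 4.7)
The plan is to unfold the two compositions and reduce the inequality to a direct application of the contraction hypothesis on $g$. Writing out
\[
h\circ g(c)=g(g(c))+1,\qquad g\circ h(c)=g(g(c)+1),
\]
the desired inequality $h\circ g(c)>g\circ h(c)$ is equivalent to
\[
g(g(c)+1)-g(g(c))<1.
\]
So the target becomes a statement about the increment of $g$ over an interval of length exactly $1$.

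The first step is a location check: I need to verify that the two points $g(c)$ and $g(c)+1$ both lie in the interval $(0,c^*)$ where the contraction estimate is available. Since $g$ is monotonic increasing with $g(0)=0$ and $0<c<c^*$, we get $0<g(c)<g(c^*)=c^*-1$, so $g(c)\in(0,c^*-1)$. Adding $1$ gives $g(c)+1\in(1,c^*)$. Both points therefore lie in $(0,c^*)$, so the contraction estimate applies.

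The second step is to apply the contraction hypothesis with $x=g(c)+1$ and $y=g(c)$. This yields
\[
\bigl|g(g(c)+1)-g(g(c))\bigr|\le\kappa\bigl|(g(c)+1)-g(c)\bigr|=\kappa<1,
\]
which immediately gives $g(g(c)+1)-g(g(c))\le\kappa<1$, and hence $h\circ g(c)-g\circ h(c)\ge 1-\kappa>0$.

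There is really no obstacle: the content of the statement is a one-line consequence of contractivity, and the only thing to watch is that both arguments to $g$ sit in the domain $(0,c^*)$ on which the contraction constant is guaranteed, which is exactly what the normalization $g(c^*)+1=c^*$ is set up to secure.
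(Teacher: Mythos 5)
Your proof is correct, and it takes a more direct route than the paper's. You exploit the identity $h=g+1$ to rewrite the claim as $g(g(c)+1)-g(g(c))<1$, verify that both arguments lie in $(0,c^*)$ (using $g(0)=0$, monotonicity, and the normalization $g(c^*)=c^*-1$), and then apply the contraction estimate once to a pair of points at distance exactly $1$. The paper instead applies the contraction twice — to $g$ on the pair $(c,h(c))$ and to $h$ on the pair $(g(c),c)$ — adds the two bounds to conclude that the subintervals $[g(c),gh(c)]$ and $[hg(c),h(c)]$ have total length at most $\kappa\,(h(c)-g(c))$, and then argues geometrically that two intervals anchored at the opposite ends of $[g(c),h(c)]$ with combined length strictly less than the whole cannot overlap. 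Your argument is shorter and quantitatively sharper (it gives the explicit gap $hg(c)-gh(c)\ge 1-\kappa$), but it leans on the specific algebraic relation $h=g+1$; the paper's interval-covering argument is the one that survives in settings where the two branches are not literal translates of one another (as in the family \eqref{Tc2} of Theorem \ref{mainthm3}, where the same non-overlapping estimate is reused). Both are valid proofs of the lemma as stated.
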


\begin{proof}
Since $h(x)=g(x)+1$, the map $h$ clearly has contracting property. Thus, we have
$$
|g(c)-gh(c)|\leq\kappa|c-h(c)|,
$$
$$
|hg(c)-h(c)|\leq\kappa|g(c)-c|,
$$
which leads
\begin{eqnarray}
|g(c)-gh(c)| + |hg(c)-h(c)|&\leq&\kappa(|c-h(c)| + |g(c)-c|)\nonumber\\
&=&\kappa(h(c)-g(c)).\nonumber
\end{eqnarray}
Since $[g(c),gh(c)], [hg(c),h(c)]\subset [g(c),h(c)]$, we have $h\circ g(c)>g\circ h(c)$.
\end{proof}

Note that, in this section, we omit the character for a composition, $\circ$, that is, $gh(c)$ implies $g\circ h(c)$. Under these setting, the following theorem holds. 

\begin{theorem}\label{mainthm2}
 For the family of transformations $\{T_c\}_{c\in(0,c^*)}$ defined by \eqref{Tc}, there exist $c_{n,l}^L,c_{n,l}^R\in(0,c^*)$ for any $n\in\mathbb{N}_{\geq2}$ and $l\in Pr(n)$ such that
\begin{itemize}
\item[(i)] if $c\in (c_{n,l}^L,c_{n,l}^R)$, then $T_c$ conjugates $S_{\alpha,\beta}$ with $(\alpha,\beta)\in int(D_{n,l})$,
\item[(ii)] if $n'l-nl'=1$, then $c_{n,l}^R<c_{n+n',l+l'}^L<c_{n+n',l+l'}^R<c_{n',l'}^L$.
\end{itemize}
\end{theorem}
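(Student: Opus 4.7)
I reduce (i) to Theorem \ref{mainthm1} and deduce (ii) from a symbolic-dynamics comparison driven by Propositions \ref{PoGS} and \ref{impGS}. For each admissible pair $(n,l)$ I will identify an open interval $E_{n,l}\subset(0,c^*)$ such that for $c\in E_{n,l}$ the map $T_c$ satisfies (A1)--(A4)' and the itinerary
\begin{equation*}
\varepsilon_m(c):=\mathbf{1}_{[c,h(c)]}\bigl(T_c^{-m}(c)\bigr),\qquad m=1,\dots,n-1,
\end{equation*}
equals the rational characteristic sequence $\{k_m\}$ of $(n,l)$; the endpoints of $E_{n,l}$ will serve as $c_{n,l}^L$ and $c_{n,l}^R$. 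Given such an interval, part (i) is immediate: (A1) is Lemma \ref{nonoverlap}, (A2) and (A3) are built into \eqref{Tc}, and (A4)' together with the correct $\{k_m\}$-pattern is exactly the input needed for Theorem \ref{mainthm1} to produce a conjugacy with $S_{\alpha,\beta}$ for some $(\alpha,\beta)\in int(D_{n,l})$.

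To construct $E_{n,l}$, observe that each branch of $T_c^{-1}$ is the inverse of one of the fixed contractions $g$ or $h$, selected according to the prescribed itinerary; hence $T_c^{-m}(c)$ is a composition of $c$-independent contractions applied to $c$, making $c\mapsto T_c^{-m}(c)$ strictly monotone and continuous on each maximal stretch of constant itinerary. I take $c_{n,l}^L$ to be the value at which $T_c^{-(n-1)}(c)$ first enters the gap $(T_c(h(c)),T_c(g(c)))$ while the prefix $(\varepsilon_1,\dots,\varepsilon_{n-2})$ already equals $(k_1,\dots,k_{n-2})$, and $c_{n,l}^R$ the value at which it first exits from the opposite side. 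Strict monotonicity of $T_c^{-(n-1)}(c)$ in $c$ then guarantees that (A4)' with itinerary $\{k_m\}$ holds for every $c$ strictly between these two values, delivering the interval $E_{n,l}$ and settling (i).

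For (ii) I induct on the Farey mediant operation. Fix $(n,l),(n',l')$ with $l/n<l'/n'$ and $n'l-nl'=1$, and assume inductively that $c_{n,l}^R<c_{n',l'}^L$. As $c$ increases past $c_{n,l}^R$, the pre-image $T_c^{-(n-1)}(c)$ leaves the gap, so $c$ acquires further pre-images $T_c^{-n}(c),T_c^{-(n+1)}(c),\dots$ whose itinerary symbols can be read off by iterating the inverse dynamics. Using both palindromic identities in Remark \ref{palindrome}, I will check that these new symbols spell out, in the order dictated by Proposition \ref{impGS}, the tail of the mediant word $\{\hat k_m\}$ of $(n+n',l+l')$; a symmetric calculation approaching $c_{n',l'}^L$ from above yields the same mediant word. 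The two extrapolations must overlap on an open interval of $c$, which is precisely $E_{n+n',l+l'}\subset(c_{n,l}^R,c_{n',l'}^L)$. The principal obstacle is this combinatorial matching: verifying step-by-step, via both forms in Remark \ref{palindrome}, that the new symbol produced at each stage of the inverse iteration coincides with the one predicted by the mediant word of Proposition \ref{impGS}. The remaining ingredients (contraction estimates, continuity and monotonicity in $c$) are standard.
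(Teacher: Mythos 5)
Your overall strategy---reduce (i) to Theorem \ref{mainthm1}, encode the itinerary of the pre-images of $c$ by the rational characteristic sequence, and run an induction over Farey mediants using the palindrome identities of Remark \ref{palindrome}---is the same skeleton as the paper's proof. But as written the proposal has two genuine gaps and one error, and they sit exactly where the real work is.

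First, the error: the branches of $T_c^{-1}$ are $g^{-1}$ and $h^{-1}$, which are \emph{expansions} (inverses of the contractions $g$ and $h$), not ``$c$-independent contractions.'' Monotonicity of $c\mapsto T_c^{-m}(c)$ on a stretch of constant itinerary survives, but you lose the mechanism that actually produces the endpoints. The paper works with the \emph{forward} words: $c_{n,l}^L$ and $c_{n,l}^R$ are defined as the unique fixed points of the contractions $v_1\cdots v_{n-2}gh$ and $v_1\cdots v_{n-2}hg$ (with $v_i=h$ or $g$ according to $k_i$), obtained from Banach's fixed point theorem; the condition ``$T_c^{-(n-1)}(c)$ lies in the gap $(gh(c),hg(c))$'' is then equivalent to the two-sided inequality $v_1\cdots v_{n-2}gh(c)<c<v_1\cdots v_{n-2}hg(c)$, which follows from the sign information around the fixed points. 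Second, and more seriously, your definition of $c_{n,l}^L$ as ``the value at which $T_c^{-(n-1)}(c)$ first enters the gap with the correct prefix'' presupposes that such a value \emph{exists} inside $(c_{n,l}^R,c_{n',l'}^L)$; this is the crux of the induction and is not addressed. In the paper it is the content of the inclusions \eqref{gh} and \eqref{hg}: one must show that $gh(C)$ and $hg(C)$, $C=(c_{n,l}^R,c_{n',l'}^L)$, land inside the preimage of $C$ under the long word $v'_1\cdots v'_{n'-2}ghv_1\cdots v_{n-2}$, and this uses the inductive comparison inequalities (ii)$_{(n,l)}$, (ii)$_{(n',l')}$ at the two endpoints together with both palindromic rewritings of the mediant word. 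Only then does Banach's theorem yield the two fixed points $c',c''$. Third, you never argue that the new interval is nonempty and correctly ordered, i.e.\ $c'<c''$; the paper gets this by showing $c'\geq c''$ would force $hg(\hat c)<gh(\hat c)$ for some $\hat c$, contradicting the non-overlapping inequality of Lemma \ref{nonoverlap}. Finally, a Farey induction needs anchoring: the paper spends five preliminary steps on the boundary cases $(2,1)$, $(n,n-1)$ and $(n,1)$, which your induction ``on the mediant operation'' silently assumes. Until the existence, ordering, and base-case points are supplied, the proposal is a plausible outline rather than a proof.
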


\begin{proof}

Since $T_c$ is clearly satisfied the assumptions (A1), (A2) and (A3) in Theorem \ref{mainthm1}, we have to show (A4) for the item (i). We use the special type of inductions based on the Farey series.

\vspace{\baselineskip}
{\bf (STEP 1)} For the case $(c_{2,1}^L,c_{2,1}^R)$.

Setting $C_{2,1}:=(0,c^*)$. Since $gh(C_{2,1})=(g(1),g(c^*))\subset C_{2,1}$ and $hg(C_{2,1})=(1,hg(c^*))\subset C_{2,1}$ and the compositions $gh$ and $hg$ are contraction mappings , there exist unique points $c'\in gh(C_{2,1})\subset C_{2,1}$ and $c''\in hg(C_{2,1})\subset C_{2,1}$ such that $c'=gh(c')$ and $c''=hg(c'')$ from the Banach's fixed point theorem. Moreover, we have that, for any $c\in C_{2,1}$,
\begin{eqnarray}
(\ast)_{2,1}: \begin{cases}
{\rm if}\ c<c',\ {\rm then}\ c<gh(c) \\
{\rm if}\ c>c',\ {\rm then}\ c>gh(c)
\end{cases}
{\rm and}\ \ \ 
\begin{cases}
{\rm if}\ c<c'',\ {\rm then}\ c<hg(c) \\
{\rm if}\ c>c'',\ {\rm then}\ c>hg(c)
\end{cases}.\nonumber
\end{eqnarray}

Assume that $c'\geq c''$, then there exists $\hat{c}\in (c'',c')$ such that $\hat{c}>hg(\hat{c})$ and $\hat{c}<gh(\hat{c})$, that is, $hg(\hat{c})<gh(\hat{c})$ holds, which is contradict to non-overlapping condition (Remark \ref{nonoverlap}). We then have $c'<c''$. By taking $c'$ and $c''$ as $c_{2,1}^L$ and $c_{2,1}^R$ respectively, we find that, for any $c\in (c_{2,1}^L, c_{2,1}^R)$, $gh(c)<c<hg(c)$ holds which means that $c$ is not in $[g(c),gh(c)]\cup[hg(c),h(c)]$.  Therefore, $T_c$ conjugate $S_{\alpha,\beta}$ with $(\alpha,\beta)\in int(D_{2,1})$ by Theorem \ref{mainthm1}.

\vspace{\baselineskip}
{\bf (STEP 2)} For the case $(c_{3,2}^L,c_{3,2}^R)$.

The idea is similar to (STEP 1). In the case, setting $C_{3,2}=(0,c_{2,1}^L)$, we have $g^{-1}(C_{3,2})=(0,h(c_{2,1}^L))$, $gh(C_{3,2})=(g(1),c_{2,1}^L)$ and $hg(C_{3,2})=(1,hg(c_{2,1}^L))$ so that $gh(C_{3,2})\subset g^{-1}(C_{3,2})$ and $hg(C_{3,2})\subset g^{-1}(C_{3,2})$ hold. From the Banach's fixed point theorem, there exists $c'\in ggh(C_{3,2})\subset C_{3,2}$ and $c''\in ghg(C_{3,2})\subset C_{3,2}$ such that $c'=ggh(c')$ and $c''=ghg(c'')$. Moreover, we have that, for any $c\in C_{3,2}$,
\begin{eqnarray}
(\ast)_{3,2}: \begin{cases}
{\rm if}\ c<c',\ {\rm then}\ c<ggh(c) \\
{\rm if}\ c>c',\ {\rm then}\ c>ggh(c)
\end{cases}
{\rm and}\ \ \ 
\begin{cases}
{\rm if}\ c<c'',\ {\rm then}\ c<ghg(c) \\
{\rm if}\ c>c'',\ {\rm then}\ c>ghg(c)
\end{cases}.\nonumber
\end{eqnarray}

Assume that $c'\geq c''$, then there exists $\hat{c}\in (c'',c')$ such that $\hat{c}>ghg(\hat{c})$ and $\hat{c}<ggh(\hat{c})$, that is, $hg(\hat{c})<gh(\hat{c})$ holds, which is contradict to non-overlapping condition (Remark \ref{nonoverlap}). We then have $c'<c''$. Take $c'$ and $c''$ as $c_{3,2}^L$ and $c_{3,2}^R$ respectively. Since $(c_{3,2}^L, c_{3,2}^R)\subset C_{2,1}$, we have $c\in [g(c),gh(c)]\cup[hg(c),h(c)]$ by $(\ast)_{2,1}$ for any $c\in (c_{3,2}^L, c_{3,2}^R)$. Moreover,  we find $ggh(c)<c<ghg(c)$, by $(\ast)_{3,2}$, that is, $gh(c)<g^{-1}(c)<hg(c)$ holds which means that $g^{-1}(c)$ is not in $[g(c),gh(c)]$. Therefore, $T_c$ conjugate $S_{\alpha,\beta}$ with $(\alpha,\beta)\in int(D_{3,2})$ by Theorem \ref{mainthm1}.

\vspace{\baselineskip}
{\bf (STEP 3)} For the case $(c_{n,n-1}^L,c_{n,n-1}^R)$.

Assume that we have already found a interval $(c^L_{n-1,n-2},c^R_{n-1,n-2})$ so that $c^L_{n-1,n-2}=g^{n-3}gh(c^L_{n-1,n-2})$. For the case $(c_{n,n-1}^L,c_{n,n-1}^R)$, setting $C_{n,n-1}=(0,c_{n-1,n-2}^L)$, we have $g^{-(n-2)}(C_{n,n-1})=(0,h(c_{n-1,n-2}^L))$, $gh(C_{n,n-1})=(g(1),gh(c_{n-1,n-2}^L)$ and $hg(C_{n,n-1})=(1,hg(c_{n-2,n-1}^L))$ so that $gh(C_{n,n-1})\subset g^{-(n-2)}(C_{n,n-1})$ and $hg(C)\subset g^{-(n-2)}(C)$ hold. From the Banach's fixed point theorem, there exists $c'\in g^{n-2}gh(C_{n,n-1})\subset C_{n,n-1}$ and $c''\in g^{n-2}hg(C_{n,n-1})\subset C_{n,n-1}$ such that $c'=g^{n-2}gh(c')$ and $c''=g^{n-2}hg(c'')$. Moreover, we have that, for any $c\in C_{n,n-1}$,
\begin{eqnarray}
(\ast)_{n,n-1}: \begin{cases}
{\rm if}\ c<c',\ {\rm then}\ c<g^{n-2}gh(c) \\
{\rm if}\ c>c',\ {\rm then}\ c>g^{n-2}gh(c) \\
{\rm if}\ c<c'',\ {\rm then}\ c<g^{n-2}hg(c) \\
{\rm if}\ c>c'',\ {\rm then}\ c>g^{n-2}hg(c)
\end{cases}.\nonumber
\end{eqnarray}

Assume that $c'\geq c''$, then there exists $\hat{c}\in (c'',c')$ such that $\hat{c}>g^{n-2}hg(\hat{c})$ and $\hat{c}<g^{n-2}gh(\hat{c})$, that is, $hg(\hat{c})<gh(\hat{c})$ holds, which is contradict to non-overlapping condition (Remark \ref{nonoverlap}). We then have $c'<c''$. Take $c'$ and $c''$ as $c_{n,n-1}^L$ and $c_{n,n-1}^R$ respectively. We know that 
$$
(c_{n,n-1}^L,c_{n,n-1}^R)\subset C_{n-1}\subset C_{n-1,n-2}\subset\cdots\subset C_{3,2}\subset C_{2,1}.
$$
Then, for any $c\in (c_{n,n-1}^L, c_{n,n-1}^R)$, by $(\ast)_{m,m-1}$, we have  
$$
g^{-(m-2)}(c)\in[g(c),gh(c)]\ \ {\rm for}\ \ m=2,3,\cdots,n-1,
$$
and by $(\ast)_{n,n-1}$,
$g^{n-2}gh(c)<c<g^{n-2}hg(c)$, that is, $gh(c)<g^{-(n-2)}(c)<hg(c)$ holds which means that $g^{-(n-1)}(c)$ is not in $[g(c),gh(c)]\cup[hg(c),h(c)]$. Therefore, $T_c$ conjugate $S_{\alpha,\beta}$ with $(\alpha,\beta)\in int(D_{3,2})$ by Theorem \ref{mainthm1}.
 
 \vspace{\baselineskip}
{\bf (STEP 4)} For the case $(c_{3,1}^L,c_{3,1}^R)$.

Under setting $C=(c_{2,1}^R,c^*)$, we can show the existences of $c_{3,1}^L$ and $c_{3,1}^R$ by the same way as (STEP 2) and substituting the roles of $g$ and $h$.

\vspace{\baselineskip}
{\bf (STEP 5)} For the case $(c_{n,1}^L,c_{n,1}^R)$.

Under setting $C=(c_{n-1,1}^R,c^*)$, we can show the existences of $c_{n,1}^L$ and $c_{n,1}^R$ by the same way as (STEP 3) and substituting the roles of $g$ and $h$.

\vspace{\baselineskip}
{\bf (STEP 6)} For the case $(c_{n,l}^L,c_{n,l}^R)$.

We show that for any $n\in\mathbb{N}_{\geq 2}$ and $l\in Pr(n)$, there exist $c_{n,l}^L, c_{n,l}^R$ such that 
\begin{itemize}
\item[(i)$_{(n,l)}$:] $c_{n,l}^L=v_1v_2\cdots v_{n-2}gh(c_{n,l}^L)$ and $c_{n,l}^R=v_1v_2\cdots v_{n-2}hg(c_{n,l}^R)$, 
\item[(ii)$_{(n,l)}$:] \begin{eqnarray}
\begin{cases}
{\rm if}\ c<c_{n,l}^L,\ {\rm then}\ c<v_1v_2\cdots v_{n-2}gh(c) \\
{\rm if}\ c>c_{n,l}^L,\ {\rm then}\ c>v_1v_2\cdots v_{n-2}gh(c)
\end{cases},\nonumber
\\ 
\begin{cases}
{\rm if}\ c<c_{n,l}^R,\ {\rm then}\ c<v_1v_2\cdots v_{n-2}hg(c) \\
{\rm if}\ c>c_{n,l}^R,\ {\rm then}\ c>v_1v_2\cdots v_{n-2}hg(c)
\end{cases},\nonumber
\end{eqnarray}
\item[(iii)$_{(n,l)}$:] $c_{n,l}^L<c_{n,l}^R$,
\item[(iv)$_{(n,l)}$:] for $c\in(c_{n,l}^L, c_{n,l}^R)$, $gh(c)<(v_1v_2\cdots v_{n-2})^{-1}(c)<hg(c)$,
\end{itemize}
where $\{v_i\}_{i=1}^{n-2}$ is defined by
\begin{eqnarray}\label{v}
v_i:=
\begin{cases}
h \ \ {\rm if} \ \ k_i=0\\
g \ \ {\rm if} \ \ k_i=1
\end{cases},
\end{eqnarray}
with $\{k_i\}$ is a rational characteristic sequence corresponding to $(n,l)$.

To show the above statement, we assume that the above holds for $(n,l)$ and $(n',l')$ with $n'l-nl'=1$. Then we will show the above statement for $(n+n',l+l')$. Let $\{v_i\}_{i=1}^{n-2}$ and $\{v'_i\}_{i=1}^{n'-2}$ be given by \eqref{v} with respect to $(n,l)$ and $(n',l')$ respectively. Set $C=(c_{n,l}^R,c_{n',l'}^L)$. From the assumption, we know
\begin{eqnarray}
c_{n,l}^R=v_1v_2\cdots v_{n-2}hg(c_{n,l}^R),\label{cR}\\
c_{n',l'}^L=v'_1v'_2\cdots v'_{n'-2}gh(c_{n',l'}^L).\label{cL}
\end{eqnarray}

First we show 
\begin{eqnarray}
gh(C)\subset(v'_1v'_2\cdots v'_{n'-2}ghv_1v_2\cdots v_{n-2})^{-1}(C)\label{gh}\\
hg(C)\subset(v'_1v'_2\cdots v'_{n'-2}ghv_1v_2\cdots v_{n-2})^{-1}(C)\label{hg}
\end{eqnarray}
Since $c_{n,l}^R<c_{n',l'}^L$, by (ii)$_{(n',l')}$, we have $c_{n,l}^R<v'_1v'_2\cdots v'_{n'-2}gh(c_{n,l}^R)$, that is,
\begin{eqnarray}
(v'_1\cdots v'_{n'-2})^{-1}(c_{n,l}^R) &<& gh(c_{n,l}^R)\nonumber\\
(v'_1\cdots v'_{n'-2})^{-1}g^{-1}h^{-1}(v_1\cdots v_{n-2})^{-1}(c_{n,l}^R) &<& gh(c_{n,l}^R)\ \ {\rm by}\ \ \eqref{cR}\nonumber\\
(v_1\cdots v_{n-2}hgv'_1\cdots v'_{n'-2})^{-1}(c_{n,l}^R) &<& gh(c_{n,l}^R)\nonumber\\
(v'_1\cdots v'_{n'-2}ghv_1\cdots v_{n-2})^{-1}(c_{n,l}^R) &<& gh(c_{n,l}^R)\label{11}
\end{eqnarray}
where we use the calculations pointed out in Remark \ref{palindrome}. 

Since $c_{n',l'}^L>c_{n,l}^R$, by (ii)$_{(n,l)}$, we have $c_{n',l'}^L>v_1v_2\cdots v_{n-2}hg(c_{n',l'}^L)$, that is,
\begin{eqnarray}
(v_1\cdots v_{n-2})^{-1}(c_{n',l'}^L) &>& hg(c_{n',l'}^L)\nonumber\\
(v_1\cdots v_{n-2})^{-1}(c_{n',l'}^L) &>& hgv'_1\cdots v'_{n'-2}gh(c_{n',l'}^L)\ \ {\rm by}\ \ \eqref{cL}\nonumber
\end{eqnarray}

\begin{eqnarray}
(v'_1\cdots v'_{n'-2})^{-1}g^{-1}h^{-1}(v_1\cdots v_{n-2})^{-1}(c_{n',l'}^L) &>& gh(c_{n',l'}^L)\nonumber\\
(v_1\cdots v_{n-2}ghv_1\cdots v_{n-2})^{-1}(c_{n',l'}^L) &>& gh(c_{n',l'}^L)\nonumber\\
(v'_1\cdots v'_{n'-2}ghv_1\cdots v_{n-2})^{-1}(c_{n',l'}^L) &>& gh(c_{n',l'}^L).\label{12}
\end{eqnarray}
where we use the calculations pointed out in Remark \ref{palindrome}. \eqref{11} and \eqref{12} imply \eqref{gh}. Similarly, since $c_{n,l}^R<c_{n',l'}^L$, by (ii)$_{(n',l')}$, we have $c_{n,l}^R<v'_1v'_2\cdots v'_{n'-2}gh(c_{n,l}^R)$, and by \eqref{cR} and the calculation in Remark \ref{palindrome}, 
\begin{eqnarray}
(v'_1v'_2\cdots v'_{n'-2}ghv_1v_2\cdots v_{n-2})^{-1}(c_{n,l}^R)<hg(c_{n,l}^R).\label{21}
\end{eqnarray}
Since $c_{n',l'}^L>c_{n,l}^R$, by (ii)$_{(n,l)}$, we have $c_{n',l'}^L>v_1v_2\cdots v_{n-2}hg(c_{n',l'}^L)$, and by \eqref{cR} and the calculation in Remark \ref{palindrome}, 
\begin{eqnarray}
(v'_1v'_2\cdots v'_{n'-2}ghv_1v_2\cdots v_{n-2})^{-1}(c_{n',l'}^L)>hg(c_{n',l'}^L)\ \ {\rm by}\ \ \eqref{cL}.\label{22}
\end{eqnarray}
\eqref{21} and \eqref{22} imply \eqref{hg}.

Thus, from the Banach's fixed point theorem, there exists $c'\in v_1v_2\cdots v_{n-2}hgv'_1v'_2\cdots v'_{n'-2}gh(C)\subset C$ and $c''\in v_1v_2\cdots v_{n-2}hgv'_1v'_2\cdots v'_{n'-2}hg(C)\subset C$ such that $c'=v_1v_2\cdots v_{n-2}hgv'_1v'_2\cdots v'_{n'-2}gh(c')$ and $c''=v_1v_2\cdots v_{n-2}hgv'_1v'_2\cdots v'_{n'-2}hg(c'')$. Moreover, we have that
\begin{eqnarray}
\begin{cases}
{\rm if}\ c<c',\ {\rm then}\ c<v_1v_2\cdots v_{n-2}hgv'_1v'_2\cdots v'_{n'-2}gh(c) \\
{\rm if}\ c>c',\ {\rm then}\ c>v_1v_2\cdots v_{n-2}hgv'_1v'_2\cdots v'_{n'-2}gh(c)
\end{cases},\nonumber
\\
\begin{cases}
{\rm if}\ c<c'',\ {\rm then}\ c<v_1v_2\cdots v_{n-2}hgv'_1v'_2\cdots v'_{n'-2}hg(c) \\
{\rm if}\ c>c'',\ {\rm then}\ c>v_1v_2\cdots v_{n-2}hgv'_1v'_2\cdots v'_{n'-2}hg(c)
\end{cases}.\nonumber
\end{eqnarray}
Assume that $c'\geq c''$, then there exists $\hat{c}\in (c'',c')$ such that $\hat{c}>v_1v_2\cdots v_{n-2}hgv'_1v'_2\cdots v'_{n'-2}hg(\hat{c})$ and $\hat{c}<v_1v_2\cdots v_{n-2}hgv'_1v'_2\cdots v'_{n'-2}gh(\hat{c})$, that is, $hg(\hat{c})<gh(\hat{c})$ holds, which is contradict to non-overlapping condition (Remark \ref{nonoverlap}). We then have $c'<c''$. By taking $c'$ and $c''$ as $c_{n+n',l+l'}^L$ and $c_{n+n',l+l'}^R$ respectively, we find that, for any $c\in (c_{n+n',l+l'}^L, c_{n+n',l+l'}^R)$, $v_1v_2\cdots v_{n-2}hgv'_1v'_2\cdots v'_{n'-2}gh(c)<c<v_1v_2\cdots v_{n-2}hgv'_1v'_2\cdots v'_{n'-2}hg(c)$, that is, $gh(c)<(v'_1v'_2\cdots v'_{n'-2}ghv_1v_2\cdots v_{n-2})^{-1}(c)<hg(c)$ holds which means that $(v'_1v'_2\cdots v'_{n'-2}ghv_1v_2\cdots v_{n-2})^{-1}(c)$ is not in $[g(c),gh(c])\cup[hg(c),h(c)]$. This completes the proof of the item (i). From the way to construct $c_{n,l}^L $and $c_{n,l}^R$, the item (ii) of the theorem clearly holds.  
\end{proof}

\begin{remark}
If $c=c_{n,l}^L$ (or $c_{n,l}^R$), we can prove similarly that $T_c$ conjugates $S_{\alpha,\beta}$ with $\beta=B^L_{n,l}(\alpha)$ (or $B^U_{n,l}(\alpha)$).
\end{remark}

\begin{example}
The family of maps $\{T_c\}$ defined by \eqref{Tc} for 
$g(x)=\alpha(\frac{1}{2}x+\frac{1}{4}\sin x)$. Since $g'(x)=\alpha(\frac{1}{2}+\frac{1}{4}\cos x)$ and $0<g'(x)<1$, the map $T_c$ always satisfies the assumptions for Theorem \ref{mainthm2}. The maximum value $c^*$ depends on $\alpha$ and the relation $\alpha=\frac{2(c^*-1)}{2c^*+\sin c^*}$ holds. In figure \ref{egfig1}, we display the periodic structure for the system. The number in each region implies the period.

\begin{figure}[htpb]
\begin{center}
\includegraphics[bb=0 0 700 350, width=10cm]{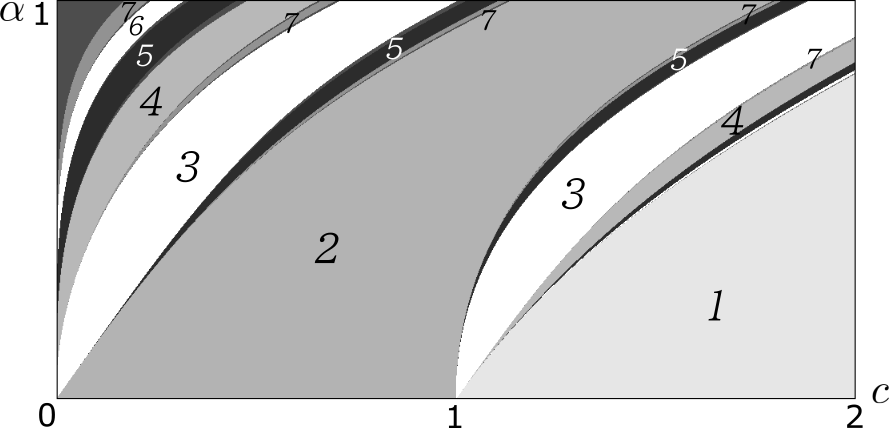}
\caption{The periodic structure for the family of maps $\{T_c\}$ defined by \eqref{Tc} together with $g(x)=\alpha(\frac{1}{2}x+\frac{1}{4}\sin x)$.}
\label{egfig1}
\end{center}
\end{figure}

\end{example}

In the end of this section, we give another type of result with Theorem \ref{mainthm2} in order to apply to transformations such as $T(x)=\alpha x^2+\beta$ (mod 1) or $T(x)=\alpha\sqrt{x}+\beta$ (mod 1), which are introduced in \cite{N1}.

Let $f:[0,1]\to[0,1]$ be a monotonically increasing continuous function such that $f(0)=0$, $f(1)<1$ and 
$|f(x)-f(y)|\leq\kappa|x-y|\ \ \ \text{for some $\kappa<1$ and $x,y\in[0,1]$}$. Define a family of transformation on $[0,1]$ by
\begin{eqnarray}
T_c(x)=
\begin{cases}
f(x)-f(c)+1 & (0\leq x < c)\\
f(x)-f(c)   & (c\leq x < 1)
\end{cases}.\label{Tc2}
\end{eqnarray}

\begin{theorem}\label{mainthm3}
For the family of transformations $\{T_c\}_{c\in(0,1)}$ defined by \eqref{Tc2}, there exist $c_{n,l}^L,c_{n,l}^R\in(0,1)$ for any $n\in\mathbb{N}_{\geq2}$ and $l\in Pr(n)$ such that
\begin{itemize}
\item[(i)] if $c\in (c_{n,l}^L,c_{n,l}^R)$, then $T_c$ conjugates $S_{\alpha,\beta}$ with $(\alpha,\beta)\in int(D_{n,l})$.
\item[(ii)] if $n'l-nl'=1$, then $c_{n,l}^R<c_{n+n',l+l'}^L<c_{n+n',l+l'}^R<c_{n',l'}^L$ .
\end{itemize}
\end{theorem}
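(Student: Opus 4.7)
The plan is to adapt the proof of Theorem \ref{mainthm2} essentially line by line, with one important modification dictated by the new setting. Write $g_c(x):=f(x)-f(c)$ and $h_c(x):=g_c(x)+1$ for the two branches of $T_c$; because $g_c$ and $h_c$ now depend on the parameter $c$, the composition maps that drove the fixed-point arguments of Theorem \ref{mainthm2} are no longer $\kappa^n$-contractions in $c$, so I would replace each appeal to the Banach fixed-point theorem by an argument based on strict monotonicity together with the intermediate value theorem.

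I would begin by recording three uniform preliminaries analogous to those used in Theorem \ref{mainthm2}. First, for every $c\in(0,1)$, $T_c$ satisfies (A1)--(A3) of Theorem \ref{mainthm1} directly from the definition. Second, the non-overlapping condition of Lemma \ref{nonoverlap} holds automatically with a gap of fixed width, since
\[
h_c g_c(c)-g_c h_c(c)\;=\;(1-f(c))-(f(1)-f(c))\;=\;1-f(1)\;>\;0
\]
uniformly in $c$, by the hypothesis $f(1)<1$. Third, for every composition of the form $\Phi(c):=v_1^c v_2^c\cdots v_k^c(c)$ with each $v_i^c\in\{g_c,h_c\}$, the function $\Phi$ is strictly decreasing in $c$: the chain rule, together with $\partial_x g_c=\partial_x h_c=f'(\,\cdot\,)>0$ and $\partial_c g_c=\partial_c h_c=-f'(c)<0$, yields $\Phi'(c)<0$ by induction on $k$, because at each step both the inner variable and the outer branch decrease in $c$.

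With these in hand, the six-step induction of Theorem \ref{mainthm2} proceeds verbatim, with every Banach fixed-point application replaced by monotonicity plus IVT. In STEP 1, the equations $c=g_c h_c(c)=f(1)-f(c)$ and $c=h_c g_c(c)=1-f(c)$ become $\phi(c)=f(1)$ and $\phi(c)=1$ in the variable $\phi(c):=c+f(c)$, which is strictly increasing from $\phi(0)=0$ to $\phi(1)=1+f(1)$; the unique solutions $c_{2,1}^L=\phi^{-1}(f(1))$ and $c_{2,1}^R=\phi^{-1}(1)$ exist, and $c_{2,1}^L<c_{2,1}^R$ follows from $f(1)<1$. STEPS 2--5 inductively produce $c_{n,n-1}^{L/R}$ and $c_{n,1}^{L/R}$ using the same containment inclusions as the original proof and the same non-overlapping argument for ordering. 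STEP 6, which combines $(n,l)$ and $(n',l')$ with $n'l-nl'=1$ into $(n+n',l+l')$, is also unchanged: the palindrome identities of Remark \ref{palindrome} depend only on the rational characteristic sequence combinatorics, so the longer compositions factor exactly as before, and the resulting fixed points lie in $(c_{n,l}^R,c_{n',l'}^L)$, which yields claim (ii). Once the intervals are built, claim (i) follows from Theorem \ref{mainthm1}: the defining inequalities at each step translate to the statement that the pre-image chain of $c$ under $T_c$ satisfies (A4)' for $(n,l)$.

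I expect the main obstacle to be the third preliminary. In Theorem \ref{mainthm2} the branches were fixed maps on $\mathbb{R}$, so each $k$-fold composition was automatically a $\kappa^k$-contraction and Banach applied. Here the naive Lipschitz estimate in $c$ is $\kappa(1-\kappa^k)/(1-\kappa)$, which can exceed $1$ when $\kappa\geq 1/2$, so Banach is not directly available. The replacement by strict monotonicity requires a careful sign-tracking induction, carried out separately for the compositions appearing on the $L$- and $R$-sides, and must be set up at the start before any of the fixed-point arguments can be invoked; once in hand, the rest of the proof mirrors Theorem \ref{mainthm2} closely.
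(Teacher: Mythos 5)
Your proposal is correct and follows the same skeleton as the paper's proof: the paper likewise introduces the $c$-dependent branches $g_c(x)=f(x)-f(c)$ and $h_c(x)=g_c(x)+1$, sets $H(c)=v_1^c\cdots v_{n-2}^c g_c h_c(c)$ and $G(c)=v_1^c\cdots v_{n-2}^c h_c g_c(c)$, and reruns the six-step Farey induction of Theorem \ref{mainthm2} with $H$ and $G$ in place of $v_1\cdots v_{n-2}gh$ and $v_1\cdots v_{n-2}hg$. Where you genuinely depart from the paper is the existence tool: the paper simply invokes the Banach fixed point theorem for $c'=H(c')$ and $c''=G(c'')$, exactly as in Theorem \ref{mainthm2}, and does not address the extra $c$-dependence entering through $f(c)$ at every layer of the composition. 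Your Lipschitz bookkeeping is right --- for the $(2,1)$ case $H(c)=f(1)-f(c)$ and $G(c)=1-f(c)$ are genuine $\kappa$-contractions, but for longer words the constant accumulates to roughly $\kappa(1-\kappa^{k})/(1-\kappa)$, so the literal Banach argument is only immediate when $\kappa<1/2$, whereas the theorem is asserted for all $\kappa<1$. Your replacement --- each composition $v_1^c\cdots v_k^c(c)$ is strictly decreasing in $c$ (innermost layer constant, every further layer order-reversing in $c$ and order-preserving in $x$), hence has a unique fixed point by the intermediate value theorem, and the sign dichotomies (ii)$_{(n,l)}$ follow from monotonicity of $H(c)-c$ --- closes this gap and actually yields a more robust proof than the one printed; the explicit uniform gap $h_cg_c(c)-g_ch_c(c)=1-f(1)>0$ is also a clean substitute for Lemma \ref{nonoverlap}. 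One small caveat: your monotonicity induction is phrased via $f'>0$, but $f$ is only assumed monotone and Lipschitz, so you should argue order-theoretically ($c_1<c_2$ implies $v^{c_1}(x)>v^{c_2}(x)$ for every $x$, and each $v^c$ is increasing in $x$) rather than via the chain rule; with that adjustment the argument is complete.
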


\begin{proof}
For the case $(n,l)=(2,1)$, put $C_{2,1}=(0,1)$. Giving two function $h_c(x)=f(x)-f(c)-1$ and $g_c(x)=f(x)-f(c)$, let $H(c)=g_ch_c(c)$ and $G(c)=h_cg_c(c)$. In the proof of Theorem \ref{mainthm2}, substituting the role of $hg$ and $gh$ by $G$ and $H$, we have $G(C_{2,1})\subset C_{2,1}$ and $H(C_{2,1})\subset C_{2,1}$. Then we have $c'$ and $c''$ such that $c'=H(c')$ and $c''=G(c'')$ by the Banach fixed point theorem. By the same way, taking $c'$ and $c''$ as $c_{2,1}^L$ and $c_{2,1}^R$, we can show that for any $c\in(c_{2,1}^L,c_{2,1}^R)$, inequality $H(c)<c<G(c)$ holds which means that $c$ is not in $[g_c(c),g_ch_c(c)]\cup[h_cg_c(c),h(c)]$. Therefore, $T_c$ conjugates $S_{\alpha,\beta}$ with $(\alpha,\beta)\in int(D_{2,1})$ by Theorem \ref{mainthm1}.

For any case $(n,l)$, setting $H(c)=v_1^cv_2^c\cdots v_{n-2}^cg_ch_c(c)$ and $G(c)=v_1^cv_2^c\cdots v_{n-2}^ch_cg_c(c)$, where $v_i^c=h_c\ ({\rm if}\ k_i=0)$ or $g_c\ ({\rm if}\ k_i=1)$, we can give a similar proof with Theorem \ref{mainthm2}. 
\end{proof}

\begin{example}
Consider the family of maps $\{T_c\}$ defined by \eqref{Tc2} for $f(x)=\alpha x^2$. Then, putting $\beta=1-\alpha c^2$, the map becomes $T_c(x)=\alpha x^2+\beta$ (mod 1), which is a one of numerical example in \cite{N1}. Since $f'(x)=2\alpha x$, the map $T_c$ satisfies the assumptions of Theorem \ref{mainthm3} if $\alpha<1/2$. In figure \ref{egfig2}, we display the periodic structure for the system. The number in each region implies the period.

Although we can see the Farey structure for $\alpha\geq1/2$ from figure \ref{egfig2}, our theorem tells us the existence of the structure for $0<\alpha<1/2$. Because Theorem \ref{mainthm1} does not require the contracting property, and we can apply the Banach fixed point theorem in the proof of Theorem \ref{mainthm2} if the total derivative of $v_1\cdots v_n gh v_1'\cdots v'_{n'}$ is less than one, we expect Theorem \ref{mainthm2} can be hold under weakened condition.

\begin{figure}[htpb]
\begin{center}
\includegraphics[bb=0 0 400 320, width=6cm]{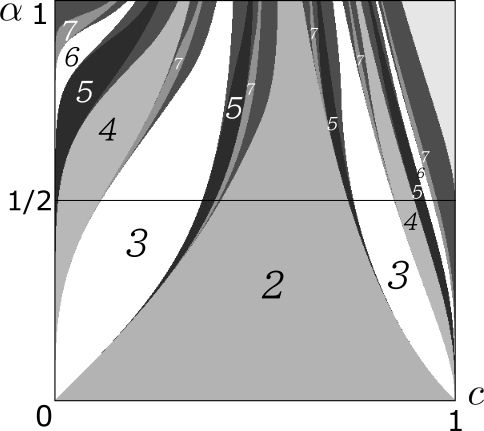}
\caption{The periodic structure for the family of maps $\{T_c\}$ defined by \eqref{Tc2} together with $f(x)=\alpha x^2$.}
\label{egfig2}
\end{center}
\end{figure}

\end{example}


\end{document}